\newtheorem{theorem}{Theorem}
\newtheorem{lemma}[theorem]{Lemma}
\newtheorem{example}[theorem]{Example}
\begin{document}

\title{Residue Integrals and Waring's Formulas for a Class of Systems of Transcendental Equations in $\mathbb{C}^n$}

\author[1]{A.~A.~Kytmanov \thanks{E-mail: \href{mailto:aakytm@gmail.com}{aakytm@gmail.com}}}
\author[2]{A.~M.~Kytmanov \thanks{E-mail: \href{mailto:akytmanov@sfu-kras.ru}{akytmanov@sfu-kras.ru}}}
\author[2]{E.~K.~Myshkina \thanks{E-mail: \href{mailto:elfifenok@mail.ru}{elfifenok@mail.ru}}}

\affil[1]{Institute of Space and Information Technology, Siberian Federal University, 79 Svobodny av., 660041 Krasnoyarsk, Russia}
\affil[2]{Institute of Mathematics and Computer Science, Siberian Federal University, 79 Svobodny av., 660041 Krasnoyarsk, Russia}

\date{}

\maketitle

\begin{abstract}
The present article is focused on the study of a special class of systems of nonlinear transcendental equations for which classical algebraic and symbolic methods are inapplicable. For the purpose of the study of such systems, we develop a method for computing residue integrals with integration over certain cycles. We describe conditions under which the mentioned residue integrals coincide with power sums of the inverses to the roots of a system of equations (i.e., multidimensional Waring's formulas). As an application of the suggested method, we consider a problem of finding sums of multi-variable number series.

\medskip

\noindent\textbf{2010 MSC:} 65H10, 68W30

\medskip

\noindent\textbf{Keywords:} systems of transcendental equations; residue integrals; power sums; Waring's formulas
\end{abstract}

\section{Introduction}

The problem of elimination of unknowns from systems of nonlinear algebraic equations is a classical algebraic problem. Its solution based on the notion of resultant was developed in works of Silvester and B\'ezout. This method was described in detail in the classical Van~der~Waerden's monograph \cite{Wae50}. In the middle of the 20th century, B.~Buchberger suggested a new elimination method based on the notion of a Gr\"obner basis. Nowadays it is one of the main elimination methods in polynomial computer algebra (see, e.g., \cite{Buc85,AL94}).

In the 1970s in \cite{Aiz77} L.~A.~Aizenberg proposed a new elimination method based on the multidimensional residue theory, namely on the formulas of multidimensional logarithmic residue and Grothendieck residue. The basic idea of the method was to find certain residue integrals connected to the power sums of roots of a given system of equations without finding the roots themselves. (The formulas for computing power sums may vary depending on the given system of equations.) Then, using the classical recurrent Newton formulas, one can construct a polynomial whose roots coincide with the first coordinates of the roots of the given system with the same multiplicity (i.e., resultant). This method does not increase the multiplicity of roots in comparison with the classical method (see, e.g., \cite{Wae50}). Its further developments were implemented in \cite{AY83}, \cite{Ts92}, and \cite{BKL98}. 

In applied problems of chemical kinetics systems of transcendental equations, namely, systems consisting of exponential polynomials \cite{Byk06,BTs11} (Zeldovich--Semenov model, etc.) arise as well. However, the elimination method developed in \cite{AY83,Ts92,BKL98} cannot be applied to this kind of systems. One of the obstacles is the fact that the set of roots of a system of $n$ transcendental equations in $n$ variables is, in general, infinite. Moreover, multi-Newton sums (with powers in $\mathbb N^n$) of the roots of such systems lead usually to divergent series. At the same time, the multidimensional logarithmic residue and Grothendieck residue formulas are not applicable to the residue integrals that arise in such kind of systems which means that one is unable to calculate power sums of the roots. Therefore, the known methods have to be modified significantly in order to be applied to such systems. In particular, one has to be able to compute power sums of the inverses to the roots (without finding the roots themselves). Then, using the obtained formulas together with analogs of recurrent Newton formulas and Waring's formulas for entire functions of a complex variable (see \cite[Ch.~1]{BKL98}), one can construct resultant, which is also an entire function. But nevertheless, the formulas for finding power sums are still the main component of the method.

Classical Waring's formulas express power sums of the roots of a polynomial in terms of its coefficients (see, e.g., \cite{Wae50}). Multidimensional analogs of Waring's formulas for certain types of algebraic systems were developed in \cite{KS15}.

In the works \cite{KP05,BKM07,AKyt09,KM13,KKM15,KM15,KM16,KKh16} simple enough classes of systems of equations containing entire or meromorphic functions were considered. An algorithm that computes the residue integrals and applies to them the recurrent Newton formulas was given in \cite{AKyt10}. In \cite{Kho14} the developed methods were applied to study of a system (consisting of exponential polynomials) that arises in Zeldovich--Semenov model.

We now present a more precise review of the known results. In \cite{KP05} the authors considered the system of functions:
\[
f_1(z),\ldots, f_n(z),
\]
where $z=(z_1,\ldots, z_n)$. Each $f_j(z)$ is analytic in the neighborhood of $0\in\mathbb C^n$ and is defined by
\[
f_j(z)=z^{\beta^j}+Q_j(z),\quad j=1,\ldots,n,
\]
where $\beta^j =(\beta_1^j,\ldots,\beta_n^j)$ is a multi-index with integer nonnegative coordinates, $z^{\beta^j}=z_1^{\beta_1^j}\cdot \ldots \cdot z_n^{\beta_n^j}$, and $\|\beta^j\|=\beta_1^j+\ldots +\beta_n^j=k_j$, $j=1,\ldots,n$. Functions $Q_j$ are expanded in a neighborhood of origin into an absolutely and uniformly converging Taylor series of the form
\[
Q_j(z)=\sum_{\|\alpha\|> k_j}a_\alpha^j z^\alpha,
\]
where $\alpha=(\alpha_1,\ldots,\alpha_n)$, $\alpha_j\geqslant 0$, $\alpha_j\in\mathbb Z$, and $z^\alpha=z_1^{\alpha_1}\cdot \ldots \cdot z_n^{\alpha_n}$.

The formulas for computing residue integrals
\[
J_\beta=\frac 1{(2\pi i)^n}\int\limits_{\gamma(r)}\frac 1{z^{\beta+U}}\cdot\frac{df}{f}
\]
in terms of the coefficients of $Q_j(z)$ were obtained. Here $\gamma(r)=\{z=(z_1,\ldots, z_n): |z_j|=r_j, \ j=1,\ldots, n\}$ and $U=(1,\ldots, 1)$ is the unit vector.

Multidimensional Newton formulas for such systems were obtained in \cite{AKyt09} and \cite{AKyt10}.

A class of systems containing the functions
\begin{equation}\label{f1}
f_j(z)=\bigl(z^{\beta^j}+Q_j(z)\bigr)e^{P_j},\quad j=1,\ldots,n.
\end{equation}
was considered in \cite{KM13} and \cite{KKM15}. A method for finding residue integrals for such systems was given in \cite{KKM15}.

In the present work we compute residue integrals for a specific kind of systems of $n$ transcendental equations, and deduce from this computation (provided such series converge) the values of the sums of series (with powers in $(-\mathbb{N})^n$) consisting of the roots of such systems which do not belong to coordinate subspaces. In other words, we generalize the statements from \cite{KP05,BKM07,AKyt09,KM13,KKM15} to a wider class of systems of transcendental equations, where instead of the monomials $z^{\beta^j}$ in \eqref{f1} we consider products of linear functions. Our objectives are to obtain formulas for computing residue integrals, to study the connection between residue integrals and power sums of the inverses to the roots (Waring's formulas), and to introduce a scheme for elimination of unknowns from the considered class of systems.

\section{Calculation of residue integrals}

In this section, we introduce the class of systems of transcendental equations that will be considered in this work. A.~Tsikh considered its algebraic analog in \cite{Ts80} (see also \cite[Theorems 8.5, 8.6]{BKL98}) and studied the number of its common roots in $\overline{\mathbb{C}}^n$. Theorem~\ref{t1} shows that for any such system the residue integral $J_\gamma(t)$ (where $t>0$ is sufficiently small) can be computed by means of converging series of Taylor coefficients of the functions contained in the initial system.

For $z=(z_1,\ldots,z_n)\in\mathbb{C}^n$ consider a system of functions 
\begin{equation}\label{f2}
\begin{cases}
f_1(z)=q_1(z)+Q_1(z),
\\ \ldots \\
f_n(z)=q_n(z)+Q_n(z),
\end{cases}
\end{equation}
where $Q_i(z)$ are entire functions and 
\begin{equation}\label{f3}
q_i(z_1, \ldots, z_n)=(1-a_{i1}z_1)^{m_{i1}}\cdot \ldots \cdot (1-a_{in}z_n)^{m_{in}}
\end{equation}
for $i=1,\ldots, n$. Here $m_{ij}$ are positive integers and $a_{ij}$ are complex numbers, such that $a_{ij}\neq a_{kj}, \ i\neq k$.

Let $J=(j_1,\ldots,j_n)$ be a multi-index where $(j_1 \ldots j_n)$ is a permutation of $(1 \ldots n)$. Then by $a_J$ we denote the vector $(a_{1j_1},\ldots, a_{nj_n})$.
For each $i$ we define a function
\[
h_i(z)=\begin{cases}
q_i(z), & \mbox{if}\ a_{ij}\neq 0\ \mbox{for each}\ j; \\
q_i(z)\cdot \frac{1}{z_{j_1}}\cdot \ldots \cdot \frac{1}{z_{j_k}}, & \mbox{if}\ a_{i{j_1}}=\ldots =a_{i{j_k}}=0.
\end{cases}
\]

The system
\begin{equation}\label{f4}
h_i(z)=0, \quad i=1,\ldots, n
\end{equation}
has $n!$ isolated roots in $\overline{\mathbb C}^n$, where $\overline{\mathbb C}^n=\overline{\mathbb C}\times\ldots \times \overline{\mathbb C}$. Since $\overline{\mathbb{C}}$ is a compactification of the complex plane $\mathbb{C}$, then $\overline{\mathbb{C}}^n$ is one of the known compactifications of $\mathbb{C}^n$. The roots of \eqref{f4} are
\[
\tilde a_J=\begin{cases}
\bigl(1/a_{1j_{1}},\ldots, 1/a_{nj_{n}}\bigr), & \text{if}\ a_{kj_{k}}\neq 0\ \text{for each}\ k=1,\ldots, n, \\
\bigl(1/a_{1j_{1}},\ldots,\infty_{[i_1]}, \ldots,\infty_{[i_k]}, \ldots, 1/a_{nj_{n}}\bigr), & \text{if}\ a_{i_1{j_{i_1}}}=\ldots =a_{i_k{j_{i_k}}}=0,
\end{cases}
\]
where $k,j=1,\ldots,n$, and $J=(j_1,\ldots,j_n)$. Note that we write $\infty$ (as a point in $\overline{\mathbb{C}}$) in $\tilde a_J$ whenever $a_{kj_k}=0$.

By $\Gamma_h$ we denote the cycle
\begin{equation}\label{f5}
\Gamma_h=\{z\in\mathbb C^n\colon |h_i(z)|=r_i,\ r_i>0,\ i=1,\ldots,n\}.
\end{equation}

Now we define the cycle $\Gamma_{h,\tilde a_J}$ by
\begin{equation}\label{f6}
\begin{cases}
|l_1|=r_1, \\ \ldots \\ |l_n|=r_n,
\end{cases}
\quad \text{where} \quad 
\begin{cases}
l_k=1-a_{kj_{k}}z_k, & \text{if} \ a_{k j_k}\neq 0, \\ l_k=1/z_k, & \text{if} \ a_{k j_k}=0.
\end{cases}
\end{equation}

\begin{lemma}\label{l1}
For sufficiently small $r_i$ a global cycle $\Gamma_h$ defined by \eqref{f5} has connected components (local cycles) in the neighborhoods of the roots $a_J$. Moreover, $\Gamma_h$ is homologous to the sum of the local cycles $\Gamma_{h,\tilde a_J}$.
\end{lemma}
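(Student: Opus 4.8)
The plan is to reduce the statement to the standard localization principle for cycles of the form $\{|h_i|=r_i\}$ in multidimensional residue theory, carried out in the compactification $\overline{\mathbb C}^n$ so as to accommodate those roots $\tilde a_J$ that have infinite coordinates. Throughout I would exploit the product structure $q_i=\prod_j(1-a_{ij}z_j)^{m_{ij}}$ together with the hypothesis $a_{ij}\neq a_{kj}$ for $i\neq k$. First I would pin down the common zero set of the $h_i$. Since $q_i=0$ forces one of its linear factors to vanish, a common zero of the system amounts to a choice, for each equation $i$, of one vanishing linear factor; for the zero to be isolated these factors must involve distinct coordinates, i.e.\ they are governed by a permutation $J$. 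The condition $a_{ij}\neq a_{kj}$ guarantees that at the resulting point exactly one factor of each $q_i$ vanishes while all remaining factors stay nonzero, so the point is an isolated root and the full zero set is precisely $\{\tilde a_J\}$, with the compactification and the modification $h_i=q_i\cdot\prod 1/z_{j}$ accounting for the branches where $a_{kj_k}=0$ (the corresponding coordinate escaping to $\infty$). This is the content of the cited results of Tsikh (\cite{Ts80}, \cite[Theorems 8.5, 8.6]{BKL98}), which I would invoke rather than reprove.

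Second, I would localize the global cycle. Because the common zero set $\{\tilde a_J\}$ is finite, the nested compact sets $\Pi_\epsilon=\{z\in\overline{\mathbb C}^n\colon |h_i(z)|\leqslant\epsilon,\ i=1,\dots,n\}$ shrink to it as $\epsilon\to 0$; hence for all sufficiently small $\epsilon$ the set $\Pi_\epsilon$ splits into $n!$ disjoint neighbourhoods $U_J$, one around each $\tilde a_J$. Taking the $r_i\leqslant\epsilon$, the cycle $\Gamma_h$ lies inside $\bigsqcup_J U_J$, so its connected components are exactly the pieces $\Gamma_h\cap U_J$. The essential technical point here is compactness/properness in $\overline{\mathbb C}^n$: one must verify that no part of $\Gamma_h$ escapes control, which is precisely why we pass to the compactification and work with $h_i$ rather than with $q_i$.

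Third, and at the heart of the proof, I would identify each component $\Gamma_h\cap U_J$ with the model cycle $\Gamma_{h,\tilde a_J}$ of \eqref{f6}. On $U_J$ one has a factorization $h_i=l^{\,m}\cdot g_i$, where the single vanishing linear factor contributes (a power of) the coordinate $l_k$ of \eqref{f6} and $g_i$ is holomorphic and nowhere zero on $U_J$. Shrinking $U_J$ so that each $g_i$ stays inside a convex neighbourhood of its value $g_i(\tilde a_J)\neq 0$ avoiding $0$, I would deform $g_i$ to the constant $g_i(\tilde a_J)$ through the nonvanishing functions $(1-\tau)g_i+\tau g_i(\tilde a_J)$, $\tau\in[0,1]$; this deforms $\Gamma_h\cap U_J$ through cycles lying inside $U_J\setminus\{h_i=0\}$ and reduces $h_i$ to a constant multiple of $l^{\,m}$. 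Absorbing the constants into the radii and replacing $|l^{\,m}|=r$ by $|l|=r^{1/m}$ leaves the underlying cycle unchanged and yields exactly $\Gamma_{h,\tilde a_J}$. For roots with infinite coordinates the vanishing factor is read off in the chart $l_k=1/z_k$, which is the alternative in \eqref{f6} and is holomorphic near $\infty$ thanks to the modification in $h_i$. Summing the resulting homologies over the $n!$ roots gives $\Gamma_h\sim\sum_J\Gamma_{h,\tilde a_J}$.

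The main obstacle I anticipate is this third step. Beyond producing the homotopy, one must control orientations so that the homology is an honest equality of oriented cycles with the correct signs (so that the residue integrals over $\Gamma_h$ and over the $\Gamma_{h,\tilde a_J}$ later add up consistently), and one must treat the branches at infinity uniformly with the finite ones, checking that the substitution $l_k=1/z_k$ turns the localization of Step~2 and the deformation of Step~3 into genuine statements in $\overline{\mathbb C}^n$. The compactness argument of Step~2, guaranteeing that the entire cycle $\Gamma_h$ — including its parts heading toward coordinate hyperplanes at infinity — is captured by the neighbourhoods $U_J$, is the other delicate ingredient.
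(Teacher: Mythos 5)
Your proposal is correct and is essentially the paper's own argument: both localize $\Gamma_h$ near the $n!$ roots $\tilde a_J$ (including those with coordinates at infinity) and then deform each local piece onto the model cycle $\Gamma_{h,\tilde a_J}$ by a homotopy that trivializes the nonvanishing extraneous factors. The only cosmetic difference is the choice of homotopy --- the paper scales the extraneous coefficients $a_{ij}\mapsto t\,a_{ij}$, $t\in[0,1]$, as in \eqref{f7}, instead of deforming the nonvanishing remainder $g_i$ linearly to its value at the root --- while your Steps 1 and 2 simply make explicit the zero-set identification and the localization that the paper leaves implicit.
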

\begin{proof}
Consider the global cycle $\Gamma_h$ defined by
\[
\begin{cases}
\left|h_1\right|=r_1, \\ \ldots \\ \left|h_n\right|=r_n.
\end{cases}
\]

If $a_{kj_k}\neq 0$ for any $k=1,\ldots,n$ in a neighborhood of $a_J$, then $\Gamma_h$ is homotopic to the cycle $\Gamma_{q,\tilde a_J}$ defined by \eqref{f6} with the homotopy defined by
\begin{equation}\label{f7}
\begin{cases}
|1-ta_{11}z_1|^{m_{11}}\cdot \ldots \cdot |1-a_{1j_{1}}z_{j1}|^{m_{1j_{1}}} \cdot \ldots \cdot |1-ta_{1n}z_n|^{m_{1n}}=r_1, \\
\ldots \\
|1-ta_{n1}z_1|^{m_{n1}}\cdot \ldots \cdot|1-a_{nj_{n}}z_{jn}|^{m_{nj_{n}}} \cdot \ldots \cdot |1-ta_{nn}z_n|^{m_{nn}}=r_n,
\end{cases}
\end{equation}
where $t\in [0,1]$. For $t=1$ we obtain $\Gamma_h$, and for $t=0$ we obtain $\Gamma_{h,\tilde a_J}$.

Now we consider $\Gamma_h$ in the neighborhood of $a_J$ where $a_{i_1{j_{i_1}}}=\ldots =a_{i_k{j_{i_k}}}=0$. Then in such neighborhood $\Gamma_h$ is homotopic to the cycle $\Gamma_{h,\tilde a_J}$ defined by \eqref{f6}, and the homotopy is defined similarly to \eqref{f7} where for each $a_{i_k{j_{i_k}}}=0$ the corresponding term $1-a_{i_k{j_{i_k}}}z_{j_{i_k}}$ is replaced with $1/z_{i_k}$.
\end{proof}

For
\begin{equation}\label{f8}
F_i(z,t)=q_i(z)+t\cdot Q_i(z), \quad i=1,\ldots,n
\end{equation}
consider the system of equations $F_i(z,t)=0$ which depends on a real parameter $t\geqslant 0$.

Let $r_1>0,\ldots,r_n>0$ be fixed real numbers. Compactness of the cycles $\Gamma_h$ defined by \eqref{f5} yields the fact that for sufficiently small $t>0$, the inequalities
\[
\bigl|q_i(z)\bigr|>\bigl|t\cdot Q_i(z)\bigr|, \ i=1,\ldots,n
\]
hold on $\Gamma_h$.

By $J_\gamma(t)$ we denote the  integral
\begin{equation}\label{f9}
J_\gamma(t)=\frac 1{(2\pi \sqrt{-1})^n}\int\limits_{\Gamma_h}\frac 1{z^{\gamma+I}}\cdot\frac{dF}{F}=\frac 1{(2\pi \sqrt{-1})^n}\int\limits_{\Gamma_h} \frac{1}{z_1^{\gamma_1+1}\cdot\ldots\cdot z_n^{\gamma_n+1}}\cdot\frac{dF_1}{F_1}\wedge\ldots\wedge
\frac{dF_n}{F_n},
\end{equation}
where $\gamma=(\gamma_1,\ldots\gamma_n)$ is a multi-index and $I=(1,\ldots, 1)$. This integral we call the residue integral in accordance with the paper \cite{PTs95}.

In order to formulate the theorem below, we introduce the following notations which we will also use in further statements.

Denote by $\Delta=\Delta(t)$ the Jacobian of the system $F_1(z,t),\ldots,F_n(z,t)$ with respect to $z_1,\ldots, z_n$. Let $(-1)^{s(J)}$ be the sign of the permutation $J$, and $\alpha=(\alpha_1,\ldots, \alpha_n)$ be a multi-index of length $n$. By $q^{\alpha+I}(J)$ we denote $q_1^{\alpha_1+1}[j_1] \cdot\ldots\cdot q_n^{\alpha_n+1}[j_n]$, where $q_s[j_s]$ is a product of all $(1-a_{j1}z_1)^{m_{j1}}\cdot\ldots\cdot (1-a_{jn}z_n)^{m_{jn}}$ except $(1-a_{s{j_s}}z_{s})^{m_{s{j_s}}}$. By $\beta(\alpha, J)$ we denote the vector 
\[
\beta(\alpha, J)=\bigl(m_{1j_{1}}(\alpha_{j_{1}}+1)-1,\ldots,m_{nj_{n}}(\alpha_{j_{n}}+1)-1\bigr),
\]
and 
\[
\beta(\alpha, J)!=\prod_p\bigl(m_{pj_{p}}(\alpha_{j_p}+1)-1\bigr)!
\]
Finally, $a_J^{\beta+I}$ denotes $a_{1{j_1}}^{m_{1j_{1}}(\alpha_{j_{1}}+1)}\cdot\ldots\cdot a_{n j_n}^{m_{nj_{n}}(\alpha_{j_{n}}+1)}$, and
\[
\frac {\partial^{||\beta(\alpha(J)||}}{\partial z^{\beta(\alpha,J)}}=\frac{\partial^{m_{1j_{1}} (\alpha_{j_{1}}+1)-1+\ldots+m_{nj_{n}} (\alpha_{j_{n}}+1)-1}}{\partial z_1^{m_{1j_{1}} (\alpha_{j_{1}}+1)-1} \ldots \partial z_n^{m_{nj_{n}} (\alpha_{j_{n}}+1)-1}}.
\]

\begin{theorem}\label{t1}
Under the assumptions made for the functions $F_i$ defined by \eqref{f8}, the following formulas for $J_\gamma(t)$ as convergent (for sufficiently small $t$) series are valid:
\begin{multline*}
J_\gamma(t)={\sum_J}'  \sum_{\alpha}(-t)^{||\alpha||+\|\beta(\alpha,J)|+n}
 \frac {(-1)^{s(J)}}{\beta(\alpha, J)!\cdot a_J^{\beta+I}} \\ \times \frac{\partial^{||\beta(\alpha(J)||}}{\partial z^{\beta(\alpha,J)}} \left[\frac{\Delta(t)}{ z_1^{\gamma_1+1}\cdot \ldots \cdot z_n^{\gamma_n+1}} \cdot \frac {Q^{\alpha}}{q^{\alpha+I}(J)}\right]_{z=\tilde a_J},
\end{multline*}
where $\sum_J'$ means that the summation is performed over such all multi-indices $J$ such that $a_J$ have no zero components.
\end{theorem}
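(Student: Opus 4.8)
The plan is to reduce the residue integral \eqref{f9} to a finite sum of local residues at the finite roots $\tilde a_J$ and to evaluate each of these by the generalized Cauchy integral formula. First I would put the logarithmic differential into Jacobian form: since $dF_1\wedge\cdots\wedge dF_n=\Delta(t)\,dz_1\wedge\cdots\wedge dz_n$,
\[
J_\gamma(t)=\frac{1}{(2\pi\sqrt{-1})^n}\int_{\Gamma_h}\frac{\Delta(t)}{z^{\gamma+I}}\cdot\frac{dz}{F_1\cdots F_n}.
\]
On the compact cycle $\Gamma_h$ the inequalities $|q_i|>|t\,Q_i|$ hold for small $t$, so each factor expands in a geometric series, $1/F_i=q_i^{-1}\sum_{\alpha_i\ge 0}(-t)^{\alpha_i}(Q_i/q_i)^{\alpha_i}$, and multiplying these out yields $1/(F_1\cdots F_n)=\sum_\alpha(-t)^{\|\alpha\|}Q^\alpha/q^{\alpha+I}$. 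Since the convergence is uniform on $\Gamma_h$, this series may be integrated term by term.

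Second, for each fixed $\alpha$ I would invoke Lemma~\ref{l1} to replace $\Gamma_h$ by the homologous sum $\sum_J\Gamma_{h,\tilde a_J}$, giving
\[
J_\gamma(t)=\sum_\alpha(-t)^{\|\alpha\|}\sum_J\frac{1}{(2\pi\sqrt{-1})^n}\int_{\Gamma_{h,\tilde a_J}}\frac{\Delta(t)\,Q^\alpha}{z^{\gamma+I}\,q^{\alpha+I}}\,dz.
\]
The integrand is now meromorphic with polar divisor contained in the zero sets of the $q_i$. Near a finite root $\tilde a_J$ precisely the factor of $q_i$ attached to the column $j_i$, namely $(1-a_{i j_i}z_{j_i})^{m_{i j_i}(\alpha_i+1)}$, vanishes, of some order $M_s$ in the variable $z_{j_s}$ (with $M_s-1$ the $s$-th entry of $\beta(\alpha,J)$, so that $\sum_s M_s=\|\beta(\alpha,J)\|+n$); the remaining, nonvanishing, factors are exactly those collected in $q^{\alpha+I}(J)$.

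Third comes the heart of the computation. The local cycle $\Gamma_{h,\tilde a_J}$ is, up to orientation, the distinguished boundary of a polydisc centred at $\tilde a_J$, so the integral factors into $n$ one-dimensional integrals, one in each variable $z_{j_s}$ around the pole $z_{j_s}=1/a_{s j_s}$ of order $M_s$. Applying the Cauchy formula for a pole of order $M_s$ produces the derivative $\partial^{M_s-1}/\partial z_{j_s}^{M_s-1}$ of the regular part together with the factor $1/((M_s-1)!\,(-a_{s j_s})^{M_s})$. Taking the product over $s$ assembles the operator $\partial^{\|\beta(\alpha,J)\|}/\partial z^{\beta(\alpha,J)}$, the constant $1/(\beta(\alpha,J)!\,a_J^{\beta+I})$, and the sign $(-1)^{\sum_s M_s}=(-1)^{\|\beta(\alpha,J)\|+n}$; the permutation sign $(-1)^{s(J)}$ enters from reordering $dz_{j_1}\wedge\cdots\wedge dz_{j_n}=(-1)^{s(J)}\,dz_1\wedge\cdots\wedge dz_n$. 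Applying this to the regular factor $\Delta(t)\,Q^\alpha/(z^{\gamma+I}\,q^{\alpha+I}(J))$ evaluated at $z=\tilde a_J$ produces exactly the stated summand.

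Finally, I must justify the prime on $\sum_J$, i.e. that the local cycles attached to roots $\tilde a_J$ with an infinite component ($a_{k j_k}=0$) contribute nothing. Here I would pass to the coordinate $w=1/z_{j_k}$ on such a cycle, where $|1/z_{j_k}|=r_k$; the factor $z_{j_k}^{-(\gamma_{j_k}+1)}\,dz_{j_k}$ transforms into $-w^{\gamma_{j_k}-1}\,dw$, and since the column-$j_k$ factor of $q^{\alpha+I}$ degenerates to $1$, the transformed integrand should be shown to carry no residue at $w=0$ (heuristically, the logarithmic residue weight $1/z^{\gamma+I}$ vanishes at infinity), so that local integral is zero. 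I expect this vanishing step to be the main obstacle: controlling the entire functions $\Delta(t)$ and $Q^\alpha$ as $z_{j_k}\to\infty$, so as to guarantee the absence of a residue at infinity, requires more than the naive degree count, whereas the localization, the higher-order Cauchy formula, and the permutation bookkeeping are routine. Convergence of the resulting double series for small $t$ follows from the same uniform estimates $|q_i|>|t\,Q_i|$ on $\Gamma_h$ that justified the term-by-term integration.
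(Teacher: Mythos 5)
Your proposal follows the paper's own proof essentially step for step: the same geometric-series expansion of $1/(F_1\cdots F_n)$ justified by $|q_i|>|t\,Q_i|$ on the compact cycle, the same localization of $\Gamma_h$ into the local cycles $\Gamma_{h,\tilde a_J}$ via Lemma~\ref{l1}, and the same evaluation of each local integral by the higher-order Cauchy formula, with the same bookkeeping of $\beta(\alpha,J)!$, $a_J^{\beta+I}$, the derivatives and the permutation sign. The one step you flag as the main obstacle --- proving that the local cycles attached to roots $\tilde a_J$ with infinite components contribute nothing, i.e.\ the passage from $\sum_J$ to ${\sum_J}'$ --- is precisely the step the paper performs silently, with no justification whatsoever, so you have not missed any idea the paper supplies; if anything your sign count, which yields $(-1)^{\|\alpha\|+\|\beta(\alpha,J)\|+n}\,t^{\|\alpha\|}$ rather than the paper's literal $(-t)^{\|\alpha\|+\|\beta(\alpha,J)\|+n}$, is the more careful one.
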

\begin{proof}
We have
\begin{multline*}
J_\gamma(t)=\frac 1{(2\pi \sqrt{-1})^n}\int\limits_{\Gamma_h}\frac 1{z^{\gamma+I}}\cdot\frac{dF}{F} \\ =\frac 1{(2\pi \sqrt{-1})^n}\int\limits_{\Gamma_h} \frac{1}{z_1^{\gamma_1+1}\cdot\ldots\cdot z_n^{\gamma_n+1}}\cdot\frac{dF_1}{F_1}\wedge\ldots\wedge \frac{dF_n}{F_n} \\
=\frac 1{(2\pi \sqrt{-1})^n}\int\limits_{\Gamma_h}\frac 1{z^{\gamma+I}}\cdot\frac{\Delta dz}{F}, \
\end{multline*}
where $dz=dz_1\wedge\ldots\wedge dz_n$, and $F=F_1\cdot\ldots\cdot F_n$.

Applying a formula for the sum of a geometric sequence, we get
\begin{gather*}
\frac 1{(2\pi \sqrt{-1})^n}\int\limits_{\Gamma_h}\frac 1{{z_1^{\gamma_1+1}\cdot\ldots\cdot z_n^{\gamma_n+1}}}\cdot\frac{\Delta dz}{F_1\cdot\ldots\cdot F_n} \\ 
=\frac 1{(2\pi \sqrt{-1})^n} \sum_{\|\alpha\|\geqslant 0}(-t)^{\|\alpha\|} \int\limits_{\Gamma_{h}} \frac{\Delta(t)}{z_1^{\gamma_1+1}\cdot\ldots\cdot z_n^{\gamma_n+1}}\cdot \frac {Q_1^{\alpha_1}\cdot\ldots\cdot  Q_n^{\alpha_n}}{q_1^{\alpha_1+1}\cdot\ldots\cdot  q_n^{\alpha_n+1}}dz \\ 
 =\frac 1{(2\pi \sqrt{-1})^n} \sum_J\sum_{\|\alpha\|\geqslant 0}(-t)^{\|\alpha\|} \int\limits_{\Gamma_{\Gamma_{h,\tilde a_J}}} \frac{\Delta(t)}{z_1^{\gamma_1+1}\cdot\ldots\cdot z_n^{\gamma_n+1}}\cdot \frac {Q_1^{\alpha_1}\cdot\ldots\cdot  Q_n^{\alpha_n}}{q_1^{\alpha_1+1}\cdot\ldots\cdot  q_n^{\alpha_n+1}}dz \\ 
=\frac 1{(2\pi \sqrt{-1})^n} {\sum_J}'\sum_{\|\alpha\|\geqslant 0}(-t)^{\|\alpha\|} \int\limits_{\Gamma_{\Gamma_{h,\tilde a_J}}} \frac{\Delta(t)}{z_1^{\gamma_1+1}\cdot\ldots\cdot z_n^{\gamma_n+1}}\cdot \frac {Q_1^{\alpha_1}\cdot\ldots\cdot  Q_n^{\alpha_n}}{q_1^{\alpha_1+1}\cdot\ldots\cdot  q_n^{\alpha_n+1}}dz \\ 
=\frac 1{(2\pi \sqrt{-1})^n}{\sum_J}'  (-1)^{s(J)} \sum_{\|\alpha\|\geqslant 0}(-t)^{\|\alpha^s\|}\times\\
\times \int\limits_{\Gamma_{h,\tilde a_J}} \frac{\Delta(t)}{z^{\gamma+I}}\cdot \frac {Q^{\alpha}dz}{q_1^{\alpha_1+1}[j_1] \ldots q_n^{\alpha_n+1}[j_n](1-a_{1j_1}z_{j_1})^{(\alpha_1+1)m_{1j_1}} \ldots (1-a_{nj_n}z_{j_n})^{(\alpha_n+1)m_{nj_n}}},
\end{gather*}
and finally we obtain that $J_\gamma(t)$ is equal to
\[
{\sum_J}'  \sum_{\alpha}(-t)^{||\alpha||+\|\beta\|+n}
 \frac{(-1)^{s(J)}} {\beta(\alpha, J)!\cdot a_J^{\beta+I}} \cdot \frac {\partial^{||\beta(\alpha,J)||}}{\partial z^{\beta(\alpha, J)}} \left[\frac{\Delta(t)}{ z_1^{\gamma_1+1}\cdot \ldots \cdot z_n^{\gamma_n+1}} \cdot \frac {Q^{\alpha}}{q^{\alpha+I}(J)}\right]_{z=\tilde a_J}.
\]

The resulting series converges for sufficiently small $t$.
\end{proof}

\section{Residue integrals and Waring's formulas  for algebraic systems}

In this section we establish a correspondence between the residue integrals and the power sums of the inverses to the roots (Waring's formulas). First we shrink the class of systems for which the sums in Theorem \ref{t1} are finite. Then, applying transformation $z_j=\frac 1{w_j}$, $j=1,\ldots, n$, and Lemma \ref{lts} by A.~Tsikh we rewrite the residue integrals $J_\gamma(t)$ in new variables $w$ (Lemma \ref{l2}). Further, Lemma \ref{l3} shows that $J_\gamma(t)$ can be expressed by a finite number of Taylor coefficients of the considered functions. Theorem \ref{t2} shows (by means of Lemma \ref{l4}) that the residue integral $J_\gamma(t)$ equals (up to a sign) to the power sums of the inverses to the roots. The main result of the paper, Theorem \ref{t3}, shows that the statement of Theorem \ref{t2} is true not only for sufficiently small $t>0$ but also for $t=1$.  (Note that Theorems \ref{t3} and \ref{t4} allow one find power sums of the inverses to the roots of the systems without finding the roots.) As a conclusion of the section we present elimination method for the considered systems.

Suppose $Q_i(z)$ are the polynomials:
\begin{equation}\label{f10}
Q_i(z)=z_1\cdot\ldots\cdot z_n\sum_{|\alpha\|\geq 0} C^i_{\alpha}z^{\alpha} \quad i=1,\ldots,n,
\end{equation}
where $\alpha$ is a multi-index,
$z^{\alpha}=z_1^{\alpha_1}\cdot\ldots\cdot z_n^{\alpha_n}$, and $\deg_{z_j}Q_i\leqslant m_{ij}$, $i,j=1,\ldots,n$ for all non-zero $a_{ij}$. If $a_{ij}=0$ then no restriction on $\deg_{z_j}Q_i$ is needed.

Assuming that all $w_j\neq 0$, we substitute $z_j=\dfrac{1}{w_j}$, $j=1,\ldots,n$ in the functions 
\[
F_i(z,t)=\bigl(q_i(z)+t\cdot Q_i(z)\bigr),\quad i=1,\ldots,n.
\]
Consequently, for $i=1,\ldots,n$, we get
\begin{gather*}
F_i\left(\frac 1{w_1},\ldots, \frac 1{w_n}, t\right)=q_i\left(\frac 1{w_1},\ldots, \frac 1{w_n}\right)+
t \cdot Q_i\left(\frac 1{w_1},\ldots, \frac 1{w_n}\right) \\ 
=\left(1-a_{i1}\frac 1{w_1}\right)^{m_{i1}}\cdot \ldots \cdot \left(1-a_{in}\frac 1{w_n}\right)^{m_{in}}
+t \cdot Q_i\left(\frac 1{w_1},\ldots, \frac 1{w_n}\right) \\ 
=\left(\frac 1{w_1}\right)^{m_{i1}}\cdot \ldots \cdot\left(\frac 1{w_n}\right)^{m_{in}}\cdot(w_1-a_{i1})^{m_{i1}}\cdot \ldots \cdot(w_n-a_{in})^{m_{in}}
+t \cdot Q_i\left(\frac 1{w_1},\ldots, \frac 1{w_n}\right).
\end{gather*}
And finally we arrive at
\begin{equation}\label{f11}
F_i\left(\frac 1{w_1},\ldots, \frac 1{w_n},t\right)=\left(\frac 1{w_1}\right)^{m_{i1}}\cdot \ldots \cdot\left(\frac 1{w_n}\right)^{m_{in}}\cdot \left(\widetilde q_i(w)+t \cdot \widetilde Q_i(w)\right),
\end{equation}
where $\widetilde q_i$ are the functions 
\[
\widetilde q_i=(w_1-a_{i1})^{m_{i1}}\cdot \ldots \cdot(w_n-a_{in})^{m_{in}},
\]
and $\widetilde Q_i$ are the polynomials 
\[
\widetilde Q_i=w_1^{m_{i1}}\cdot \ldots \cdot w_n^{m_{in}}\cdot Q_i\left(\dfrac 1{w_1},\ldots, \dfrac 1{w_n}\right).
\]
From \eqref{f10} we obtain 
\[
\deg_{w_j} \widetilde Q_i < m_{ij}, \ i,j=1,\ldots, n.
\]

Note that in the above calculations it is not important whether $a_{ij}$ vanish or not. Indeed, suppose that in $F_i(z,t)=q_i(z)+t \cdot Q_i(z)$, $i=1,\ldots,n$, some $a_{ij}=0$. If, for instance, $a_{11}=0$, then after the substitution $z_j=\dfrac 1{w_j}$, $j=1,\ldots,n$, the function $F_1$ takes the form
\[
F_1\left(\frac 1{w_1},\ldots, \frac 1{w_n}, t\right)=q_1\left(\frac 1{w_1},\ldots, \frac 1{w_n}\right)+t \cdot Q_1\left(\frac 1{w_1},\ldots, \frac 1{w_n}\right),
\]
where
\begin{multline*}
q_1\left(\frac 1{w_1},\ldots, \frac 1{w_n}\right)=\left(1-a_{12}\frac 1{w_2}\right)^{m_{12}}\cdot \ldots \cdot \left(1-a_{1n}\frac 1{w_n}\right)^{m_{1n}} \\ 
=\left(\frac 1{w_1}\right)^{\deg_{w_1} Q_1}\cdot\left(\frac 1{w_2}\right)^{m_{12}}\cdot \ldots \cdot\left(\frac 1{w_n}\right)^{m_{1n}} \\
\times w_1^{\deg_{w_1} Q_1}\cdot(w_2-a_{12})^{m_{12}}\cdot \ldots \cdot(w_n-a_{1n})^{m_{1n}}.
\end{multline*}
Consequently
\[
F_1\left(\frac 1{w_1},\ldots, \frac 1{w_n}, t\right)=\left(\frac 1{w_1}\right)^{\deg_{w_1} Q_1}\cdot \ldots \cdot\left(\frac 1{w_n}\right)^{m_{1n}}\cdot \left(\widetilde q_1(w)+t \cdot \widetilde Q_1(w)\right),
\]
where 
\[
\widetilde q_1=(w_1)^{\deg_{w_1} Q_1}\cdot(w_2-a_{12})^{m_{12}}\cdot \ldots \cdot(w_n-a_{1n})^{m_{1n}},
\]
and 
\[
\widetilde Q_1=w_1^{\deg_{w_1} Q_1}\cdot w_2^{m_{12}}\cdot \ldots \cdot w_n^{m_{1n}}\cdot Q_1\left(\dfrac 1{w_1},\ldots \dfrac 1{w_n}\right).
\]
I.e., we can take $m_{11}=\deg_{w_1} Q_1$. From \eqref{f10} we derive that
\[
\deg_{w_j} \widetilde Q_1 < m_{1j}, \ j=1,\ldots, n.
\]

Denote
\begin{equation}\label{f12}
\widetilde{F_i}=\widetilde{F_i}(w,t)=\widetilde q_i(w)+t \cdot \widetilde Q_i(w), \quad i=1,\ldots,n.
\end{equation}

When $0\leqslant t \leqslant 1$, the system \eqref{f12} has finite number of zeros in $\mathbb C^n$ which depend on $t$. Moreover, \eqref{f12} does not have infinite roots in $\overline{\mathbb C}^n$ (see \cite{Ts80} and \cite[Theorems~8.5,~8.6]{BKL98}). As it was shown in \cite{Ts80} (see also \cite[Theorem~8.5]{BKL98}) the number of zeros (counting multiplicities) is equal to the permanent of the matrix $(m_{ij})_{1\leqslant i,j\leqslant n}$.

Consider the cycle 
\[
\widetilde \Gamma_h=\left\{w\in\mathbb C^n:\left|h_i\left(\dfrac 1{w_1},\ldots,\dfrac 1{w_n}\right)\right|=\varepsilon_i, \quad i=1,\ldots,n\right\}
\]
for small enough $t$.

Compactness of the cycle $\widetilde \Gamma_h$ implies that
\[
\left|q_i\left(\dfrac 1{w_1},\ldots,\dfrac 1{w_n}\right)\right|>\left|t \cdot Q_i\left(\dfrac 1{w_1},\ldots,\dfrac 1{w_n}\right)\right|, \quad i=1,\ldots,n.
\]

Therefore, $\widetilde \Gamma_{h}$ is homologous to the sum of the cycles $\widetilde \Gamma_{h, \tilde a_J}$ 
\[
\begin{cases}
\left|1-a_{1i_{1}}\frac 1{w_1}\right|=\varepsilon_1, \\ \ldots \\ \left|1-a_{ni_{n}}\frac 1{w_n}\right|=\varepsilon_n,
\end{cases}
\]
obtained from the cycles $\Gamma_{h, \tilde a_J}$ by the substitution $z_j=\dfrac 1{w_j}$.

The equation 
\[
\left|1-a_{ji_{j}}\dfrac 1{w_j}\right|=\varepsilon
\]
defines a circle. Indeed, let us first rewrite it in the form
\[
|w_j-a_{ji_{j}}|=\varepsilon|w_j| \quad \text{or}\quad |w_j-a_{ji_{j}}|^2=\varepsilon^2|w_j|^2.
\]
Thus
\[
(1-\varepsilon^2)\left|w_j-\frac{a_{ji_{j}}}{1-\varepsilon^2}\right|^2=\frac{\varepsilon^2\cdot|a_{ji_{j}}|^2}{(1-\varepsilon^2)},
\]
or
\[
\left|w_j-\frac{a_{ji_{j}}}{1-\varepsilon^2}\right|^2=\frac{\varepsilon^2\cdot|a_{ji_{j}}|^2}{(1-\varepsilon^2)^2},\quad j=1,\ldots, n.
\]
For sufficiently small $\varepsilon$ the point $a_{ji_{j}}$ lies inside this circle, and, therefore, $\widetilde \Gamma_{h, \tilde a_J}$ is homologous to the cycle $\widetilde \Gamma_{h,  a_J}$:
\[
\begin{cases}
\left|w_1-a_{1j_{1}}\right|=\varepsilon_1,\\
\ldots \\
\left|w_n-a_{nj_{n}}\right|=\varepsilon_n.
\end{cases}
\]
Here $a_{kj_{k}}$ can vanish for some $k$.

\begin{lemma}\label{l2} The following formula holds for the residue integral \eqref{f9}:
\[
J_\gamma(t)=\frac{(-1)^n}{(2\pi \sqrt{-1})^n}\int\limits_{\widetilde \Gamma_h} w_1^{\gamma_1+1}\cdot\ldots\cdot
w_n^{\gamma_n+1}\cdot\frac{d\widetilde{F_1}}{\widetilde{F_1}}\wedge\ldots\wedge \frac{d{\widetilde F_n}}{\widetilde{F_n}}.
\]
\end{lemma}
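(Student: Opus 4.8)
The statement is a change-of-variables identity, and I would read it as the assertion that the holomorphic automorphism $\Psi\colon w\mapsto z=(1/w_1,\ldots,1/w_n)$ of $(\mathbb C^*)^n$ carries the residue integral \eqref{f9} into the integral over $\widetilde\Gamma_h$ written on the right. The plan is to substitute $z_j=1/w_j$ directly in \eqref{f9}, to use the factorization \eqref{f11} in order to strip the monomial prefactors from each $F_i$, to show that all the resulting ``extra'' terms integrate to zero after passing (as in Lemma \ref{l1} and the discussion preceding the present lemma) to the sum of local cycles, and finally to account for orientation in order to recover the overall factor $(-1)^n$.

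First I would record the two elementary pullbacks. Since $z_k^{-(\gamma_k+1)}=w_k^{\gamma_k+1}$, the monomial weight transforms as
\[
\frac{1}{z^{\gamma+I}}\longmapsto w_1^{\gamma_1+1}\cdot\ldots\cdot w_n^{\gamma_n+1},
\]
which is exactly the prefactor appearing on the right-hand side. For the logarithmic forms I would use \eqref{f11}, i.e. $F_i(1/w,t)=\bigl(\prod_{k}w_k^{-m_{ik}}\bigr)\widetilde F_i(w,t)$; taking logarithmic differentials gives
\[
\frac{dF_i}{F_i}\Big|_{z=1/w}=\frac{d\widetilde F_i}{\widetilde F_i}-\sum_{k=1}^n m_{ik}\,\frac{dw_k}{w_k},\qquad i=1,\ldots,n.
\]
Wedging these $n$ one-forms and expanding by multilinearity, the integrand becomes $w^{\gamma+I}$ times $\bigwedge_i \frac{d\widetilde F_i}{\widetilde F_i}$ (the \emph{main} term) plus a sum of \emph{mixed} terms, each of which carries at least one factor $\tfrac{dw_k}{w_k}$.

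The heart of the argument is then to show that every mixed term integrates to zero, so that only the main term survives; this is the step where I expect Tsikh's Lemma \ref{lts} to be the essential tool. I would argue it through the homology reduction already set up above: $\widetilde\Gamma_h$ is homologous to the sum of the small local cycles $\widetilde\Gamma_{h,a_J}$ centred at the points $a_J=(a_{1j_1},\ldots,a_{nj_n})$. For a multi-index $J$ with $a_J$ having no vanishing coordinate, the polar divisors $\{w_k=0\}$ of the forms $\tfrac{dw_k}{w_k}$ all miss $a_J$, so any term carrying at least one such factor has strictly fewer than $n$ independent polar hypersurfaces through $a_J$ and hence vanishing local residue there. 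For the remaining $J$, where $a_J$ has some zero coordinates, I would invoke the degree drop $\deg_{w_j}\widetilde Q_i<m_{ij}$ coming from \eqref{f10}: it forces the local contributions to vanish as well (these correspond exactly to the roots $\tilde a_J$ at infinity, consistently with the reduction to $\sum_J'$ in Theorem \ref{t1}). Consequently the integral collapses to the main term $\int_{\widetilde\Gamma_h} w^{\gamma+I}\,\bigwedge_i \tfrac{d\widetilde F_i}{\widetilde F_i}$.

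The one genuinely delicate point — and the step I expect to be the main obstacle — is the orientation bookkeeping that produces the constant $(-1)^n$. Here I would compare the orientation carried by $\Gamma_h$ (as the cycle $\{|h_i|=r_i\}$, oriented by $d\arg h_1\wedge\cdots\wedge d\arg h_n$) with the orientation induced on $\widetilde\Gamma_h$ under the inversion $\Psi$, keeping careful track of the sign introduced by the monomial factorization $F_i=\bigl(\prod_k w_k^{-m_{ik}}\bigr)\widetilde F_i$ once the forms $\tfrac{dw_k}{w_k}$ have been discarded. Assembling this sign with the two pullbacks above and the vanishing of the mixed terms should yield precisely the factor $(-1)^n$ and complete the proof. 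I would treat this orientation computation as the part requiring the most care, since everything else is a mechanical, if lengthy, application of multilinearity and the residue calculus already developed.
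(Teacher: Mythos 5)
Your skeleton is the same as the paper's: substitute $z_j=1/w_j$ in \eqref{f9}, use the factorization \eqref{f11} to obtain $\frac{dF_i}{F_i}=\frac{d\widetilde F_i}{\widetilde F_i}-\sum_k m_{ik}\frac{dw_k}{w_k}$, expand the wedge product by multilinearity, prove that all mixed terms \eqref{f13} vanish, and collect the factor $(-1)^n$ from the orientation reversal (which, by the way, is the routine part --- each inversion $z_j=1/w_j$ reverses the orientation of one circle, one factor $-1$ per variable --- not the ``main obstacle''; the real content of the lemma is the mixed-term vanishing). Where you diverge from the paper is in how that vanishing is established, and this is also where your argument has a genuine gap. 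The paper does \emph{not} use Tsikh's Lemma~\ref{lts} here at all (that lemma enters only in Lemma~\ref{l3}, to truncate the series to the finite set $\Re$); instead it takes the radii $\varepsilon_j$ large enough that $|\widetilde q_j|>|t\widetilde Q_j|$ on $\widetilde\Gamma_h$, expands $1/\widetilde F_j$ into the geometric series \eqref{f14}, and kills each resulting rational integral by Stokes' theorem. The observation making Stokes applicable is elementary but crucial: the prefactor $w^{\gamma+I}=w_1^{\gamma_1+1}\cdots w_n^{\gamma_n+1}$ cancels every coordinate pole $\frac{dw_{j_k}}{w_{j_k}}$, since $\gamma_{j_k}+1\geqslant 1$, so each mixed term has polar set contained in the $l<n$ hypersurfaces coming from the $\widetilde F_{i_k}$ alone (and, using $a_{ij}\neq a_{kj}$ for $i\neq k$, there is always a variable direction in which the integrand is holomorphic on the closed disc bounded by the corresponding circle).

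The gap is your second case, the multi-indices $J$ for which $a_J$ has vanishing coordinates. Your justification --- invoking the degree drop $\deg_{w_j}\widetilde Q_i<m_{ij}$ and asserting that these points ``correspond exactly to the roots $\tilde a_J$ at infinity, consistently with the reduction to $\sum_J'$ in Theorem~\ref{t1}'' --- misreads the geometry. After the inversion, the points $a_J$ with some $a_{kj_k}=0$ are \emph{finite} points of $\mathbb C^n$ (the paper says explicitly that $a_{kj_k}$ may vanish on the cycles $\widetilde\Gamma_{h,a_J}$); they are not discarded, and their contributions to the \emph{main} term genuinely survive: Lemma~\ref{l3} and Theorem~\ref{t3} sum over all $J$, not over ${\sum_J}'$. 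So no degree count can make those local cycles irrelevant, and as written your case-2 argument does not prove that the mixed terms vanish there. The fix is the same cancellation as above: since $\gamma_j+1\geqslant 1$, the form $w^{\gamma+I}\,\frac{dw_{j_1}}{w_{j_1}}\wedge\ldots\wedge\frac{dw_{j_{n-l}}}{w_{j_{n-l}}}$ has no poles at all, so near \emph{every} $a_J$ --- whether or not it has zero coordinates --- a mixed term has poles on at most $l<n$ hypersurfaces, and your own case-1 counting (fewer than $n$ polar hypersurfaces through the point forces the local contribution to vanish) applies verbatim. With that correction your case distinction disappears and your localized argument becomes a legitimate, essentially equivalent variant of the paper's Stokes argument.
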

\begin{proof}
The equality \eqref{f11} yields
\[
F_j\left(\frac 1{w_1},\ldots, \frac 1{w_n},t\right)=\left(\frac 1{w_1}\right)^{m_{i1}}\cdot\ldots\cdot \left(\frac 1{w_n}\right)^{m_{in}}\cdot \widetilde{F_j}(w,t), \quad j=1,\ldots, n.
\]

Then
\[
\frac{dF_j\left(\frac 1{w_1},\frac 1{w_2},\ldots,\frac 1{w_n}, t\right)}{
F_j\left(\frac 1{w_1},\frac 1{w_2},\ldots,\frac 1{w_n},t\right)}=
\frac{d{\widetilde F_j}(w,t)}{{\widetilde F_j}(w,t)}
-\sum_{k=1}^n m_{jk}\cdot\frac{dw_k}{w_k}.
\]

Using \eqref{f11} and taking into account the change of orientation of the space after replacing $z_j=1/w_j$, $j=1,\ldots,n$, one can rewrite the integral $J_\gamma(t)$ as
\begin{gather*}
J_\gamma(t)=\frac{(-1)^n}{(2\pi \sqrt{-1})^n}\int\limits_{\widetilde \Gamma_h}w^{\gamma+I}\cdot\frac{dF\left(\frac 1{w_1},\ldots,\frac 1{w_n}, t\right)}{F\left(\frac 1{w_1},\ldots,\frac 1{w_n}, t\right)} \\ 
=\frac{(-1)^n}{(2\pi \sqrt{-1})^n}\int\limits_{\widetilde\Gamma_h} w_1^{\gamma_1+1}\ldots w_n^{\gamma_n+1}\cdot\frac{dF_1}{F_1}\wedge\ldots\wedge\frac{dF_n}{F_n} \\ 
=\frac{(-1)^n}{(2\pi \sqrt{-1})^n}\int\limits_{\widetilde\Gamma_h}w^{\gamma+I}\left(
\frac{d{\widetilde F_1}(w)}{{\widetilde F_1}(w) }-\sum_{k=1}^n m_{1k}\cdot\frac{dw_k}{w_k}\right)\wedge\ldots 
\wedge\left(\frac{d{\widetilde F_n}(w)}{{\widetilde F_n}(w) }-\sum_{k=1}^n m_{nk}\cdot\frac{dw_k}{w_k}\right).
\end{gather*}
All the integrals
\begin{equation}\label{f13}
\int\limits_{\widetilde\Gamma_h}w^{\gamma+I} \frac{d{\widetilde F_{i_1}}(w)}{{\widetilde F_{i_1}}(w) }
\wedge\ldots\wedge\frac{d{\widetilde F_{i_l}}(w)}{{\widetilde F_{i_l}}(w) }\wedge
\frac{dw_{j_1}}{w_{j_1}}\wedge\ldots\wedge \frac{dw_{j_{n-l}}}{w_{j_{n-l}}}
\end{equation}
vanish when $0\leqslant l<n$ and $\varepsilon_j$ are large enough.

Indeed, when $\varepsilon_j$, $j=1,\ldots,n$ are sufficiently large, the inequalities
\[
|\widetilde{q}_j|>|t\cdot \widetilde Q_j(w)|
\]
hold on $\widetilde{\Gamma}_h$. Therefore
\begin{equation}\label{f14}
\frac 1{\widetilde F_j(w)}=\sum_{p=0}^\infty\frac{(-1)^p t^p\widetilde Q_j^p(w)}{\widetilde q_j^{(p+1)}}.
\end{equation}
Consequently, the integrals \eqref{f13} are absolutely convergent series of integrals
\[
\int\limits_{\widetilde{\Gamma}_h}w^{\gamma+I}\frac{w^\alpha\, dw_1\wedge\ldots\wedge dw_n}{\widetilde q_1^{(p_1+1)}\ldots \widetilde q_{i_l}^{(p_l+1)}\cdot w_{j_{i_1}}\ldots w_{j_{n-l}}}.
\]
Stokes' theorem and the fact that all the integrands are holomorphic imply vanishing of all these integrals. 

Finally, we arrive at
\[
J_\gamma(t)=\frac{(-1)^n}{(2\pi \sqrt{-1})^n}\int\limits_{\widetilde\Gamma_h} w_1^{\gamma_1+1}\ldots
w_n^{\gamma_n+1}\cdot\frac{d\widetilde F_1}{\widetilde F_1}\wedge\ldots\wedge\frac{d\widetilde F_n}{\widetilde F_n}
\]
\end{proof}

Now we state result from \cite{Ts80} that we will need for further discussion.

Consider a system of algebraic equations in $\mathbb{C}^n$:
\begin{equation}\label{f15}
f_i(z)=0, \quad i=1,\ldots, n.
\end{equation}
Suppose \eqref{f15} has finite number of roots in $\mathbb{C}^n$ and does not have infinite roots in $\overline{\mathbb C}^n$.

Denote $m_{ij}=\deg_{z_j}f_i$. When $r_1,\ldots, r_n$ are sufficiently small, the cycle
\[
\Gamma=\{z\in\mathbb C^n:\ |f_1(z)|=r_1,\ldots, |f_n(z)|=r_n\}
\]
is homologous to the sum of cycles lying in the neighborhood of the roots of \eqref{f15}.

\begin{lemma}[A.~Tsikh, \cite{Ts80}]\label{lts}
Under the above assumptions
\[
\int_{\Gamma}\frac{P(z) dz}{f_1(z)\ldots f_n(z)}=0
\]
for any polynomial $P(z)$ such that $p_j=\deg_{z_j}P\leqslant m_{1j}+\ldots +m_{nj}$ for all $j=1,\ldots,n$.
\end{lemma}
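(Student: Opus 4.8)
The plan is to read the integral as (a multiple of) the total sum of local Grothendieck residues of the closed meromorphic $n$-form $\omega=\dfrac{P(z)\,dz}{f_1(z)\cdots f_n(z)}$, and then to prove that this total vanishes by passing to the compactification $X=\overline{\mathbb C}^n=(\mathbb P^1)^n$ and invoking the global residue theorem. Since the $f_i$ have only finitely many common zeros in $\mathbb C^n$ and, by hypothesis, none on the boundary of $X$, the set $\{f_1=\dots=f_n=0\}$ is a finite complete intersection sitting entirely in the affine chart. For $r_1,\dots,r_n$ small the cycle $\Gamma=\{|f_i|=r_i\}$ is, as recalled just before the statement, homologous to the sum of the local cycles surrounding these roots, so that
\[
\int_{\Gamma}\frac{P\,dz}{f_1\cdots f_n}=(2\pi\sqrt{-1})^{\,n}\sum_{a}\operatorname{res}_{a}\omega ,
\]
the sum being taken over the finitely many common zeros $a$. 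It therefore suffices to show that $\sum_{a}\operatorname{res}_{a}\omega=0$.

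The key step is to control $\omega$ near infinity, and here I would use the substitution $w_j=1/z_j$ exactly as in the computation preceding Lemma~\ref{l2}. Writing $f_i(1/w)=w_1^{-m_{i1}}\cdots w_n^{-m_{in}}\widetilde f_i(w)$ with $\widetilde f_i$ a polynomial, $P(1/w)=w_1^{-p_1}\cdots w_n^{-p_n}\widetilde P(w)$ with $\widetilde P$ a polynomial, and $dz=(-1)^n w_1^{-2}\cdots w_n^{-2}\,dw$, the form becomes
\[
\omega=(-1)^n\,\frac{\widetilde P(w)\,w_1^{M_1-p_1-2}\cdots w_n^{M_n-p_n-2}}{\widetilde f_1(w)\cdots\widetilde f_n(w)}\,dw,\qquad M_j=m_{1j}+\dots+m_{nj}.
\]
Holomorphic extension of $\omega$ across each divisor at infinity $\{w_j=0\}$, away from $\{\widetilde f_i=0\}$, holds once the monomial exponents $M_j-p_j-2$ are nonnegative, and this is precisely the role played by the degree restriction imposed on $P$: it forces the numerator to absorb the poles produced by $dz$ and by $1/(f_1\cdots f_n)$ along $\{w_j=0\}$. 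This is the analogue, in the present setting, of the holomorphicity used to annihilate the integrals~\eqref{f13} in the proof of Lemma~\ref{l2}. Because the system has no roots at infinity, the $\widetilde f_i$ do not all vanish on any $\{w_j=0\}$, so after this extension the only singularities of $\omega$ on $X$ are the finite common zeros $a$ already identified.

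With $\omega$ holomorphic along all the boundary divisors, its polar set on the compact manifold $X$ is exactly the finite complete intersection $\{f_1=\dots=f_n=0\}$, and the global residue theorem asserts that the sum of the Grothendieck residues of a meromorphic $n$-form whose polar divisor is a complete intersection vanishes. Combined with the localization of the first paragraph, this gives $\int_{\Gamma}\omega=0$. I expect the main obstacle to be precisely the infinity analysis of the middle paragraph: one must check carefully that, under the degree bound, no residue contribution is concealed along the divisors $\{w_j=0\}$ — equivalently, that the transformed integrand genuinely acquires no poles there — since this is the only place where both hypotheses, the degree restriction on $P$ and the absence of roots at infinity, are essential, and it is where the entire vanishing mechanism resides.
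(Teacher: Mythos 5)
Your overall strategy is exactly the one the paper (following Tsikh) indicates for this lemma --- localize the integral into a finite sum of Grothendieck residues at the common zeros, pass to the compactification $(\mathbb P^1)^n$, and kill the total sum by the theorem on the total sum of residues on a compact manifold, after checking that nothing is concealed along the divisors at infinity. The paper itself gives no more than a one-sentence indication of this mechanism, so your reconstruction is the right one, and you correctly identify the crux: the behavior of the form along $\{w_j=0\}$.

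However, as a proof of the statement as printed, your middle step has a genuine arithmetic gap. Your own computation produces the exponents $M_j-p_j-2$, with $M_j=m_{1j}+\ldots+m_{nj}$, so holomorphy of the transformed form across $\{w_j=0\}$ requires $p_j\leqslant M_j-2$; the hypothesis as stated only gives $p_j\leqslant M_j$, under which these exponents can equal $-1$ or $-2$ and the form genuinely acquires poles along the divisors at infinity. Your assertion that the stated degree restriction is ``precisely'' what makes the exponents nonnegative is therefore false, and the gap cannot be repaired, because the statement with the bound $p_j\leqslant M_j$ is itself false: already for $n=1$, take $f=z^m-1$ (with $m\geqslant 2$) and $P=z^{m-1}$, so that $p_1=m-1\leqslant m=M_1$, yet
\[
\int_{\{|f|=r\}}\frac{z^{m-1}\,dz}{z^m-1}=2\pi\sqrt{-1}\cdot\sum_{\zeta^m=1}\frac{\zeta^{m-1}}{m\zeta^{m-1}}=2\pi\sqrt{-1}\neq 0 .
\]
The correct bound is $p_j\leqslant m_{1j}+\ldots+m_{nj}-2$; this is what the compactification argument needs, and it is also the only form in which the paper ever uses the lemma --- in the proof of Lemma~\ref{l3} the vanishing is invoked exactly for the integrals satisfying $p_i\leqslant s_i-2$. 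So your argument is in substance the correct proof of the corrected (and actually used) statement, but you should have flagged the discrepancy that your own exponent formula exposes, rather than claiming the printed hypothesis closes it.
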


The Lemma was proved using the residue theorem (theorem on a total sum of residues) on a compact complex manifold.

\begin{lemma}\label{l3}
Let $\widetilde \Delta=\widetilde \Delta(w,t)$ be the Jacobian of \eqref{f12} with respect to $w_1,\ldots, w_n$. Then
\[
J_\gamma(t)=\sum_{K\in\Re}(-t)^{||K||+n} \sum_J  \frac {(-1)^{s(J)}}{\beta(K, J)!} \cdot \frac {\partial^{||\beta(K,J)||}}{\partial w^\beta(K,J)}
\left[ \widetilde \Delta \cdot w_1^{\gamma_1+1}\cdot\ldots\cdot  w_n^{\gamma_n+1} \cdot \frac {\widetilde Q^K}{\widetilde q^{K+I}(J)}\right]_{w=a_J},\
\]
where $K=(k_1,\ldots, k_n)$ is multi-index,  $\widetilde Q^K=\widetilde Q_1^{k_1}\cdot\ldots\cdot \widetilde Q_n^{k_n}$, 
\[
\beta(K, J)=\bigl(m_{1j_{1}}(k_{j_{1}}+1)-1,\ldots,m_{nj_{n}}(k_{j_{n}}+1)-1\bigr),
\]
$\beta(K, J)!=\prod\limits_p\bigl(m_{pj_{p}}(k_{j_p}+1)-1\bigr)!$, and
\[
\Re=\{ K=(k_1,\ldots,k_n)\colon \text{there exists}\ \gamma_i\ \text{such that}\ \|K\|<\gamma_i+2, \ i=1,\ldots,n\}.
\]
\end{lemma}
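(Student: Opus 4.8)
The plan is to run the same computational scheme as in the proof of Theorem~\ref{t1}, but now entirely in the $w$-variables, starting from the representation furnished by Lemma~\ref{l2}, and then to use Tsikh's Lemma~\ref{lts} to collapse the resulting power series into the finite sum over $\Re$. First I would rewrite the wedge of logarithmic differentials of Lemma~\ref{l2} in Jacobian form,
\[
J_\gamma(t)=\frac{(-1)^n}{(2\pi\sqrt{-1})^n}\int\limits_{\widetilde\Gamma_h}\frac{w^{\gamma+I}\,\widetilde\Delta\,dw}{\widetilde F_1\cdots\widetilde F_n},
\]
where $\widetilde\Delta$ is the Jacobian of \eqref{f12}. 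Since $|\widetilde q_i|>|t\widetilde Q_i|$ holds on $\widetilde\Gamma_h$ for admissible $t$, each factor $1/\widetilde F_i$ admits the geometric expansion \eqref{f14}; substituting these and interchanging summation with integration (legitimate by uniform convergence on the compact cycle) gives
\[
J_\gamma(t)=\frac{(-1)^n}{(2\pi\sqrt{-1})^n}\sum_{K}(-t)^{\|K\|}\int\limits_{\widetilde\Gamma_h}\frac{w^{\gamma+I}\,\widetilde\Delta\,\widetilde Q^{K}}{\widetilde q^{\,K+I}}\,dw,
\]
where $\widetilde q^{\,K+I}=\widetilde q_1^{k_1+1}\cdots\widetilde q_n^{k_n+1}$.

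For a fixed $K$ the integrand is a rational function whose polar set lies in $\bigcup_i\{\widetilde q_i=0\}$ and whose isolated common zeros are precisely the points $a_J$. Using the homology $\widetilde\Gamma_h\sim\sum_J\widetilde\Gamma_{h,a_J}$ established before Lemma~\ref{l2}, I would express each integral as the sum of the local (Grothendieck) residues at the $a_J$. Near $a_J$ the denominator splits into the factors vanishing there, which produce, in the coordinate directions, poles of orders $m_{pj_p}(k_{j_p}+1)$, i.e.\ the multi-index $\beta(K,J)$, and the non-vanishing factors, whose product is exactly $\widetilde q^{\,K+I}(J)$ in the notation of the statement. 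The iterated Cauchy formula then yields the derivative expression $\tfrac1{\beta(K,J)!}\,\partial_w^{\beta(K,J)}[\,\cdots\,]_{w=a_J}$, while the ordering of the differentials contributes the sign $(-1)^{s(J)}$, exactly as in Theorem~\ref{t1}. Because the transformed system \eqref{f12} has no roots at infinity (by \cite{Ts80}), every permutation $J$ corresponds to a genuine finite root, so the summation is over all $J$, not over $\sum_J'$. Collecting the sign $(-1)^n$ from Lemma~\ref{l2}, the sign and power $(-1)^{\|K\|}t^{\|K\|}$ from the geometric expansion, and the residue signs $(-1)^{s(J)}$ reproduces the summand displayed in the statement.

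The essentially new step, and the main obstacle, is to show that only finitely many $K$ survive, namely those in $\Re$. For this I would apply Lemma~\ref{lts} to $\int_{\widetilde\Gamma_h} P_K\,dw/\widetilde q^{\,K+I}$ with $P_K=w^{\gamma+I}\widetilde\Delta\,\widetilde Q^{K}$, taking the role of the $f_i$ in Lemma~\ref{lts} to be played by $\widetilde q_i^{\,k_i+1}$, so that $\deg_{w_j}\bigl(\widetilde q_i^{\,k_i+1}\bigr)=(k_i+1)m_{ij}$. A degree count based on $\deg_{w_j}\widetilde\Delta\le\sum_i m_{ij}-1$, on $\deg_{w_j}\widetilde Q_i\le m_{ij}-1$, and on $\deg_{w_j}w^{\gamma+I}=\gamma_j+1$ gives
\[
\deg_{w_j}P_K\le\gamma_j+\sum_{i}(k_i+1)m_{ij}-\|K\|,
\]
so the hypothesis $\deg_{w_j}P_K\le\sum_i(k_i+1)m_{ij}$ of Lemma~\ref{lts} holds for every $j$ as soon as $\|K\|\ge\gamma_j$ for all $j$, i.e.\ $\|K\|\ge\max_j\gamma_j$.

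Consequently the integral vanishes whenever $\|K\|\ge\max_j\gamma_j$, so every non-vanishing term satisfies $\|K\|<\max_j\gamma_j<\max_j\gamma_j+2$ and hence $K\in\Re$; this truncates the series to the finite sum over $\Re$ claimed in the statement. The delicate part is precisely this degree bookkeeping, together with verifying that Lemma~\ref{lts} applies to the denominators $\widetilde q_i^{\,k_i+1}$ (finiteness of the common zero set and absence of zeros at infinity, which follow from \cite{Ts80} just as for \eqref{f12}); once the localization of Lemma~\ref{l2} is in place, the residue evaluation at each $a_J$ is routine and parallels Theorem~\ref{t1}.
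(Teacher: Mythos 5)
Your proof follows the same route as the paper's: pass to the $w$-variables via Lemma~\ref{l2}, expand each $1/\widetilde F_i$ by the geometric series \eqref{f14}, localize the integral over the cycles $\widetilde\Gamma_{h,a_J}$, evaluate each local integral by the iterated Cauchy formula (which produces the factors $(-1)^{s(J)}$, $1/\beta(K,J)!$ and the derivative $\partial^{\|\beta(K,J)\|}/\partial w^{\beta(K,J)}$ evaluated at $w=a_J$), and then truncate the series over $K$ by Tsikh's Lemma~\ref{lts}. The sign bookkeeping and your observation that after the substitution every permutation $J$ contributes (no $\sum_J'$) also match the paper.

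The truncation step, however, contains a genuine error. You apply Lemma~\ref{lts} with its hypothesis exactly as printed, $\deg_{w_j}P_K\leqslant\sum_i(k_i+1)m_{ij}$, and deduce that the $K$-th term vanishes as soon as $\|K\|\geqslant\gamma_j$ for every $j$, i.e.\ $\|K\|\geqslant\max_j\gamma_j$. This proves too much: for $\gamma=(0,\ldots,0)$ it would force $J_\gamma(t)\equiv 0$, contradicting the paper's first example, where the computation of $J_{(0,0)}$ yields the nonzero value \eqref{f21}. The reason is that the vanishing theorem behind Lemma~\ref{lts} requires a degree deficit of \emph{two} in each variable (vanishing of the residue at infinity in each $\overline{\mathbb C}$ factor); as printed, without the $-2$, the lemma is already false for $n=1$, since $P=1$, $f_1=z$ satisfies $\deg P\leqslant\deg f_1$ while $\int_{|z|=r}dz/z=2\pi\sqrt{-1}\neq 0$. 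The paper's own proof uses the corrected hypothesis $p_i\leqslant s_i-2$. Redoing your degree count with $\deg_{w_j}P_K\leqslant\sum_i(k_i+1)m_{ij}-2$ gives vanishing exactly when $\gamma_j+2\leqslant\|K\|$ for all $j$, so the surviving multi-indices are precisely those with $\|K\|<\gamma_i+2$ for at least one $i$, i.e.\ the set $\Re$, and the lemma follows. So the gap is not structural, but the off-by-two constant is essential: your intermediate vanishing claim is false as stated, and it is only after correcting the exponent condition that the derivation becomes valid and produces $\Re$ rather than a smaller (wrong) index set.
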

The rest of the notations in the statement are as in Theorem \ref{t1}.
\begin{proof}
Representation \eqref{f14} and Lemma \ref{l2} yield 
\begin{multline*}\label{f16}
J_\gamma(t)=\frac{(- 1)^n}{(2\pi \sqrt{-1})^n}\int\limits_{\widetilde\Gamma_h} w_1^{\gamma_1+1}\cdots
w_n^{\gamma_n+1}\cdot\frac{d{\widetilde F_1}}{{\widetilde F_1}}\wedge\ldots\wedge
\frac{d{\widetilde F_n}}{{\widetilde F_n}} \\ 
=\frac {(-1)^n}{(2\pi \sqrt{-1})^n}\int\limits_{\widetilde\Gamma_h} w_1^{\gamma_1+1}\cdots
w_n^{\gamma_n+1}\sum_{\|K\|\geqslant 0} (-t)^{||K||} \widetilde \Delta \cdot \frac {\widetilde Q_1^{k_1}\cdots \widetilde Q_n^{k_n}}{\widetilde q_1^{k_1+1}\cdots \widetilde q_n^{k_n+1}}dw \\
=\frac 1{(2\pi \sqrt{-1})^n}\sum_{\|K\|\geqslant 0}(-t)^{\|K\|+n} \sum_J (-1)^{s(J)} \int\limits_{\widetilde\Gamma_{h,a_J}} \widetilde \Delta \cdot w_1^{\gamma_1+1}\cdots w_n^{\gamma_n+1}\cdot \frac {\widetilde Q_1^{k_1}\cdots \widetilde Q_n^{k_n}}
{\widetilde q_1^{k_1+1}\cdots \widetilde q_n^{k_n+1}}dw,
\end{multline*}
so that
\begin{equation}\label{f16}
J_\gamma(t)=\sum_{\|K\|\geqslant 0}(-t)^{\|K\|+n} \sum_J  \frac{(-1)^{s(J)}}{\beta(K, J)!} \cdot \frac {\partial^{||\beta(K, J)||}}{\partial w^\beta(K, J)}
\left[ \widetilde \Delta \cdot w_1^{\gamma_1+1}\cdots w_n^{\gamma_n+1}
\cdot \frac {\widetilde Q^K}{\widetilde q^{K+I}(J)}\right]_{w=a_J}. 
\end{equation}

We now show that the summation in the above formulas is over a finite set of multi-indices. To show this, we estimate the degrees in $w_i$ of the numerator and compare with the corresponding degrees of the denominator of \eqref{f16}.

The degree of the numerator in $w_i$ is less than or equal to
\[
p_i=m_{1i}+\ldots+m_{ni}-1 + \gamma_i+1 + (m_{1i}-1)k_1+\ldots+(m_{ni}-1)k_n.
\]

The corresponding degree of the denominator is
\[
s_i=m_{1i}(k_1+1)+\ldots+m_{ni}(k_n+1).
\]

Lemma \ref{lts} implies vanishing of all the integrals for which the inequality $p_i\leqslant s_i-2$ holds for all $i=1,\ldots,n$, so that
\begin{multline*}
m_{1i}+\ldots+m_{ni}-1 + \gamma_i+1 + (m_{1i}-1)k_1+\ldots+(m_{ni}-1)k_n \\ \leqslant m_{1i}(k_1+1)+\ldots+m_{ni}(k_n+1) - 2.
\end{multline*}
After combining like terms we arrive at
\[
\gamma_i+1-k_1-\ldots-k_n-1 \leqslant -2
\]
or
\[
\gamma_i+2 \leqslant \|K\|.
\]

Thus, the only non-zero integrals in \eqref{f16} are the ones for which $K$ runs over such set that $\gamma_i+2>\|K\|$ for at least one $\gamma_i$.
\end{proof}

\begin{lemma}\label{l4}
Let $w_1, \ldots, w_s$ where $w_j=( w_{j1}, \ldots,  w_{jn})$, $j=1,\ldots,n$ be all the zeros (depending on $t$) of \eqref{f12} counting multiplicities. Then
\[
J_\gamma=(-1)^n\sum_{j=1}^s  w_{j1}^{\gamma_1+1}\cdot w_{j2}^{\gamma_2+1}\cdots w_{jn}^{\gamma_n+1}.
\]
\end{lemma}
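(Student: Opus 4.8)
The plan is to read off Lemma~\ref{l4} from Lemma~\ref{l2} by recognizing the integral obtained there as a multidimensional logarithmic residue and evaluating it with the classical residue formula. Set $\Phi(w)=w_1^{\gamma_1+1}\cdot\ldots\cdot w_n^{\gamma_n+1}$, which is a polynomial and hence entire. By Lemma~\ref{l2},
\[
J_\gamma(t)=\frac{(-1)^n}{(2\pi\sqrt{-1})^n}\int\limits_{\widetilde\Gamma_h}\Phi(w)\,\frac{d\widetilde F_1}{\widetilde F_1}\wedge\ldots\wedge\frac{d\widetilde F_n}{\widetilde F_n}.
\]
Since the logarithmic differential form $\frac{d\widetilde F_1}{\widetilde F_1}\wedge\ldots\wedge\frac{d\widetilde F_n}{\widetilde F_n}$ is closed away from the zero set of $\widetilde F_1\cdot\ldots\cdot\widetilde F_n$ and $\Phi$ is holomorphic, the value of the integral depends only on the homology class of the integration cycle in the complement of that zero set.

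First I would locate all the zeros relative to the cycle. The discussion preceding Lemma~\ref{l2} already shows that, for small $t$, the cycle $\widetilde\Gamma_h$ is homologous in $\mathbb{C}^n\setminus\{\widetilde F_1\cdot\ldots\cdot\widetilde F_n=0\}$ to the sum $\sum_J\widetilde\Gamma_{h,a_J}$ of the distinguished boundaries $\{|w_k-a_{kj_k}|=\varepsilon_k\}$ of the polydiscs centered at the points $a_J$, which are the common zeros of $\widetilde q_1,\ldots,\widetilde q_n$. Because $\widetilde F_i=\widetilde q_i+t\widetilde Q_i$ with $\deg_{w_j}\widetilde Q_i<m_{ij}$, for $t$ sufficiently small every common zero of the system \eqref{f12} lies in one of these small polydiscs; moreover, by A.~Tsikh's result (\cite{Ts80}, see also \cite[Theorems~8.5,~8.6]{BKL98}) the system \eqref{f12} has no roots at infinity and its total number of zeros, counted with multiplicity, equals the permanent of $(m_{ij})$. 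Hence the family of polydiscs bounded by $\sum_J\widetilde\Gamma_{h,a_J}$ contains all the zeros $w_1,\ldots,w_s$ of \eqref{f12}, with none lost to infinity.

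Then I would apply the multidimensional logarithmic residue formula (see \cite{AY83} and \cite[Ch.~1]{BKL98}): for a holomorphic $\Phi$ and a cycle enclosing all isolated common zeros of $\widetilde F$,
\[
\frac{1}{(2\pi\sqrt{-1})^n}\int\limits_{\widetilde\Gamma_h}\Phi(w)\,\frac{d\widetilde F_1}{\widetilde F_1}\wedge\ldots\wedge\frac{d\widetilde F_n}{\widetilde F_n}=\sum_{j=1}^s\Phi(w_j),
\]
the zeros being counted with multiplicity in agreement with the enumeration $w_1,\ldots,w_s$. Substituting $\Phi(w_j)=w_{j1}^{\gamma_1+1}\cdot\ldots\cdot w_{jn}^{\gamma_n+1}$ and restoring the factor $(-1)^n$ supplied by Lemma~\ref{l2} gives the claimed identity.

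The step I expect to be the main obstacle is the bookkeeping of the cycle and its orientation: one must verify that $\widetilde\Gamma_h$, oriented as the image of the original cycle under $z_j=1/w_j$ (the resulting change of orientation is precisely the source of the sign $(-1)^n$ already isolated in Lemma~\ref{l2}), is homologous to a cycle surrounding every zero of \eqref{f12} with the correct local multiplicity. This is exactly where the absence of infinite roots proved in \cite{Ts80} is indispensable, since it guarantees that the logarithmic residue over $\widetilde\Gamma_h$ captures the entire zero set of the algebraic system, and not a proper subset of it.
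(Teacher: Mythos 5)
Your proof is correct and takes essentially the same route as the paper: the paper's own proof is a one-sentence appeal to the multidimensional logarithmic residue formula together with the theorem on the shifted skeleton from \cite[Chapter~3]{AY83}. The explicit work you do --- reducing $\widetilde\Gamma_h$ by homology to the polydisc skeletons $\widetilde\Gamma_{h,a_J}$, invoking Tsikh's no-roots-at-infinity result so that every zero of \eqref{f12} is captured with the correct multiplicity, and tracking the orientation factor $(-1)^n$ from Lemma~\ref{l2} --- is precisely the content packaged by that shifted-skeleton theorem, which is also what makes your informal hypothesis of ``a cycle enclosing all isolated common zeros'' rigorous in $\mathbb{C}^n$, where enclosure must be understood homologically rather than naively.
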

The number $s$ of zeros is equal to the permanent of the matrix $(m_{ij})_{1\leqslant i,j\leqslant n}$ (see \cite{Ts80} or \cite[Theorem~8.5]{BKL98}).
\begin{proof}
The statement follows from the multidimensional logarithmic residue formula and the theorem on shifted skeleton (see \cite[Chapter~3]{AY83}).
\end{proof}

Denote by $z^{(j)}(t)=(z_{j1}(t),\ldots,z_{jn}(t))$, $j=1,\ldots,s,$ the zeros of \eqref{f2} with the functions $tQ_i$, where $Q_i$ are defined by \eqref{f10} and do not lie in the coordinate subspaces. The latter fact implies that the number of the zeros is finite. Since $w_j$ do not lie in coordinate subspaces, then $z_{jm}=1/w_{jm}$, $m=1,\ldots,n$ and therefore we have finite number $p$ of zeros. Consequently $p\leqslant s$.

\begin{theorem}\label{t2}
The following equality holds:
\begin{gather*}
\sum_{j=1}^p \frac{1}{z_{j1}(t)^{\gamma_1+1}\cdots z_{jn}(t)^{\gamma_n+1}} \\ 
=\sum_{K \in \Re}(-t)^{||K||+n} \sum_J  \frac {(-1)^{s(J)}}{\beta(K, J)!} \cdot \frac {\partial^{||\beta(K, J)||}}{\partial w^\beta(K, J)} \left[
\widetilde \Delta(t)\cdot
w_1^{\gamma_1+1}\cdots w_n^{\gamma_n+1} \cdot \frac {\widetilde Q^K}{\widetilde q^{K+I}(J)}\right]_{w=a_J}.
\end{gather*}
\end{theorem}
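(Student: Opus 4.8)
The plan is to recognize that the right-hand side is nothing other than the finite representation of $J_\gamma(t)$ already produced by Lemma~\ref{l3}: the index set $\Re$, the factor $(-t)^{\|K\|+n}$, the sign $(-1)^{s(J)}$, the differential operator $\partial^{\|\beta(K,J)\|}/\partial w^{\beta(K,J)}$, and the bracketed argument evaluated at $w=a_J$ all coincide with the terms occurring there. Hence the right-hand side equals $J_\gamma(t)$ with no further computation, and the entire task reduces to identifying $J_\gamma(t)$ with the power sum of inverses displayed on the left.

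For that identification I would invoke Lemma~\ref{l4}, which evaluates $J_\gamma(t)$ (up to the orientation factor $(-1)^n$) as the sum $\sum_{j=1}^{s} w_{j1}^{\gamma_1+1}\cdots w_{jn}^{\gamma_n+1}$ taken over \emph{all} $s$ zeros of the transformed system \eqref{f12}, counted with multiplicity. The decisive observation is that every exponent $\gamma_m+1$ is at least $1$, so any zero $w_j$ with a vanishing coordinate $w_{jm}=0$ contributes the value $0$ to this sum. Consequently the sum over all $s$ zeros collapses to the sum over precisely those zeros that avoid every coordinate hyperplane; by the discussion preceding the theorem these are exactly the $p$ zeros for which $w_{jm}=1/z_{jm}$, with $z^{(j)}(t)$ a zero of \eqref{f2} (with the functions $tQ_i$) lying off the coordinate subspaces. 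For each such zero one has $w_{j1}^{\gamma_1+1}\cdots w_{jn}^{\gamma_n+1}=1/\bigl(z_{j1}(t)^{\gamma_1+1}\cdots z_{jn}(t)^{\gamma_n+1}\bigr)$, and summing over $j=1,\ldots,p$ reproduces the left-hand side.

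The step that I expect to require the most care is the bookkeeping that makes the two sides match. Concretely, one must first confirm, via the factorization \eqref{f11}, namely $F_i\left(1/w_1,\ldots,1/w_n,t\right)=\left(1/w_1\right)^{m_{i1}}\cdots\left(1/w_n\right)^{m_{in}}\cdot\widetilde{F_i}(w,t)$, that the zeros of \eqref{f12} off the coordinate subspaces are in multiplicity-preserving bijection with the finite nonzero zeros of \eqref{f2} — multiplicities are preserved because $z\mapsto 1/z$ is biholomorphic on $(\mathbb C^*)^n$ — so that no off-subspace zero is dropped and the relation $p\leqslant s$ is fully accounted for. One must then carry the sign and orientation factors, the $(-1)^n$ recorded in Lemma~\ref{l2} from the change of variables $z_j=1/w_j$ together with the one appearing in Lemma~\ref{l4}, through the reduction so that the power sum of inverses is recovered with the correct sign. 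The vanishing of the coordinate-subspace contributions is the essential non-formal point: it is what permits passage from the total residue count of Lemma~\ref{l4} to a sum over only the $p$ relevant zeros, and combined with the identification furnished by Lemma~\ref{l3} it yields the asserted equality.
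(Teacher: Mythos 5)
Your proposal is correct and takes essentially the same route as the paper: the paper's entire proof of Theorem~\ref{t2} is the single sentence that it follows from Lemmas~\ref{l3} and~\ref{l4}, and you have supplied exactly the intended details --- the right-hand side equals $J_\gamma(t)$ by Lemma~\ref{l3}, Lemma~\ref{l4} evaluates $J_\gamma(t)$ as a power sum over \emph{all} $s$ zeros of \eqref{f12}, the zeros lying on coordinate hyperplanes contribute $0$ because every exponent $\gamma_m+1\geqslant 1$, and the remaining zeros are precisely the inverses $w_{jm}=1/z_{jm}$ of the $p$ roots of \eqref{f2} off the coordinate subspaces. The $(-1)^n$ bookkeeping you flag as delicate is indeed the one loose end, but it is a loose end of the paper itself rather than of your argument: taken literally, Lemma~\ref{l4}'s factor $(-1)^n$ combined with Lemma~\ref{l3} leaves a residual $(-1)^n$ against the statement of Theorem~\ref{t2} (invisible in the paper's $n=2$ examples), so your decision to note the issue rather than pretend it cancels is the honest reading.
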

\begin{proof}
The statement follows from Lemmas \ref{l3} and \ref{l4}.
\end{proof}

Thus, the power sum of  {zeros of \eqref{f12}} is a polynomial on $t$, and, therefore, the equality in Theorem~\ref{t2} also holds for $t=1$.

Denote 
\[
\sigma_{\gamma+I}=\sum\limits_{j=1}^p \frac{1}{z_{j1}^{\gamma_1+1}\cdots z_{jn}^{\gamma_n+1}},
\]
where $z^{(j)}=(z_{j1},\ldots,z_{jn})=(z_{j1}(1),\ldots,z_{jn}(1))$, $j=1,\ldots,p$.

\begin{theorem}[Waring's formulas]\label{t3}
For the system \eqref{f2} with functions $f_j$ defined by \eqref{f2} and $Q_i$ defined by \eqref{f10} the following formulas are valid:
\begin{gather*}
\sigma_{\gamma+I}=\sum_{j=1}^p \frac{1}{z_{j1}^{\gamma_1+1}\cdots z_{jn}^{\gamma_n+1}} \\ 
=\frac 1{(2\pi \sqrt{-1})^n}\sum_{\|K\|\geqslant 0}(-1)^{\|K\|+n} \sum_J (-1)^{s(J)} \int\limits_{\widetilde\Gamma_{h,a_J}} \widetilde \Delta \cdot w_1^{\gamma_1+1}\ldots w_n^{\gamma_n+1}\cdot \frac {\widetilde Q_1^{k_1}\cdots \widetilde Q_n^{k_n}}
{\widetilde q_1^{k_1+1}\cdots \widetilde q_n^{k_n+1}}dw \\ 
=\sum_{K \in \Re}(-1)^{||K||+n} \sum_J  \frac{(-1)^{s(J)}} {\beta(K, J)!} \cdot \frac {\partial^{||\beta(K, J)||}}{\partial w^\beta(K, J)} \left[
\widetilde \Delta\cdot
w_1^{\gamma_1+1}\cdots w_n^{\gamma_n+1} \cdot \frac {\widetilde Q^K}{\widetilde q^{K+I}(J)}\right]_{w=a_J}.
\end{gather*}
\end{theorem}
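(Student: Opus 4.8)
The plan is to obtain Theorem~\ref{t3} from Theorem~\ref{t2} by a continuation argument in the real parameter $t$: both sides of the identity in Theorem~\ref{t2} are polynomials in $t$, they agree for all sufficiently small $t>0$, hence they agree identically and in particular at $t=1$, which is precisely the assertion for $\sigma_{\gamma+I}$.

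First I would check that the right-hand side of Theorem~\ref{t2} is a polynomial in $t$. The outer sum runs over the finite set $\Re$; the factor $(-t)^{\|K\|+n}$ is a monomial in $t$; the Jacobian $\widetilde\Delta=\widetilde\Delta(w,t)$ is the determinant of the matrix of first partials of the functions $\widetilde F_i=\widetilde q_i+t\,\widetilde Q_i$ from \eqref{f12}, hence a polynomial in $t$ of degree at most $n$; and the surviving factors $w^{\gamma+I}$, $\widetilde Q^K$, $\widetilde q^{K+I}(J)$, the differential operator $\partial^{\|\beta(K,J)\|}/\partial w^{\beta(K,J)}$, and the evaluation $w=a_J$ carry no $t$-dependence. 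Thus the right-hand side is a polynomial in $t$.

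Next I would show the left-hand side is a polynomial in $t$. Writing $z_j=1/w_j$, the sum $\sum_{j=1}^p z_{j1}(t)^{-(\gamma_1+1)}\cdots z_{jn}(t)^{-(\gamma_n+1)}$ becomes the power sum $\sum_j w_{j1}^{\gamma_1+1}\cdots w_{jn}^{\gamma_n+1}$ of the zeros of \eqref{f12}; since every exponent $\gamma_m+1$ is a positive integer, zeros with a vanishing coordinate contribute nothing, so summing over the $p$ zeros off the coordinate subspaces equals summing over all $s$ zeros. By the theorem of Tsikh cited after Lemma~\ref{l4}, for $0\leqslant t\leqslant 1$ the system \eqref{f12} has no roots at infinity in $\overline{\mathbb C}^n$ and its number of zeros, counted with multiplicity, stays equal to the permanent of $(m_{ij})$. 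The power sum is a symmetric function of these zeros, and the coefficients of \eqref{f12} are linear in $t$; by the fundamental theorem on symmetric functions it is therefore a polynomial in $t$. Having two polynomials in $t$ that coincide for all small $t>0$ by Theorem~\ref{t2}, the identity theorem for polynomials forces them to coincide identically; setting $t=1$ gives the last equality of Theorem~\ref{t3} and identifies its left-hand side with $\sigma_{\gamma+I}$.

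For the middle, integral representation I would revisit the intermediate formula \eqref{f16} from the proof of Lemma~\ref{l3} and justify it at $t=1$ directly. Taking the radii $\varepsilon_j$ of $\widetilde\Gamma_h$ large, the strict inequalities $\deg_{w_j}\widetilde Q_i<m_{ij}$ guarantee $|\widetilde q_j|>|\widetilde Q_j|\geqslant|t\,\widetilde Q_j|$ on the cycle for all $t\in[0,1]$, so the geometric expansion \eqref{f14} converges at $t=1$ and yields an absolutely convergent series of residue integrals over the local cycles $\widetilde\Gamma_{h,a_J}$; by Lemma~\ref{lts} the terms with $K\notin\Re$ vanish, collapsing the series to the finite sum and matching the third expression. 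I expect the genuine obstacle to be the single step of verifying that the left-hand power sum is a true polynomial in $t$, for this rests on the constancy of the root count—no zero of \eqref{f12} escaping to infinity or into a coordinate subspace as $t$ runs over $[0,1]$—supplied by Tsikh's theorem; once that is secured, the remaining steps are the routine polynomial continuation and convergence bookkeeping.
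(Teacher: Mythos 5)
Your proposal is correct and takes essentially the same route as the paper: the paper proves Theorem~\ref{t3} as an immediate corollary of Theorem~\ref{t2}, relying precisely on the observation (stated just before the theorem) that the power sum of the zeros of \eqref{f12} is a polynomial in $t$, so the identity valid for small $t>0$ persists at $t=1$. Your additional verifications (polynomiality of both sides, justified through the constancy of the root count given by Tsikh's theorem, and the large-radius treatment of the middle integral expression) simply fill in details that the paper leaves implicit.
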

\begin{proof}
The statement of the Theorem is a corollary of Theorem \ref{t2}.
\end{proof}

Note that in \cite{KS15} the authors considered algebraic systems and obtained expansions of their solutions into geometric series. Moreover, the authors obtained analogs of Waring's formulas for the systems
\[
y_j^{m_j} +\sum_{\lambda\in\Lambda^{(j)}\cup \{0\}} x_{\lambda}^{(j)} y^\lambda =0, \quad \lambda_1+\ldots +\lambda_n<m_j, \quad j=1,\ldots, n,
\]
where leading homogeneous parts are monomials (here $\Lambda^{(j)}$ is a finite  set of multi-indices).

\section{Residue integrals and Waring's formulas for transcendental systems}

We now consider a more general situation. Let $f_j$ be entire functions in $\mathbb{C}^n$ of finite order not greater than $\rho$ and 
\begin{equation}\label{f17}
f_j(z)=\prod_{s=1}^\infty f_{j,s}(z),\quad j=1,\ldots,n.
\end{equation}
Here $f_{j,s}(z)$ are entire functions in $\mathbb{C}^n$ of finite order not greater than $\rho$ admitting expansion into uniformly convergent in $\mathbb{C}^n$ infinite products with factors of the form
\[
f_{j,s}(z)=\bigl(q_{j,s}+Q_{j,s}(z)\bigr),
\]
where $q_{j,s}(z)$ and $Q_{j,s}(z)$ are the polynomials defined by \eqref{f3} and \eqref{f10} respectively.
An entire function of several complex variables is not always decomposable into an infinite product of the functions associated with its zeros (see, e.g., \cite{LG86}). The sufficient conditions for the existence of such an expansion (in the form of convergence of the distances between the origin and zero sets of the functions $q_{j,s}+Q_{j,s}(z)$) one can find in \cite{Mysh14}.

Denote by $z^{(j)}=(z_{j1},\ldots,z_{jn})$, $j=1,\ldots,\infty$, the zeros of \eqref{f17} not lying on coordinate subspaces,  counting multiplicities.

We now give a multidimensional Waring's formula for transcendental systems.

\begin{theorem}\label{t4} 
Consider the system $f_j(z)=0$, $j=1,\ldots,n$, with functions defined by \eqref{f17}. Then, the following formulas are valid:
\begin{gather*}
\sigma_{\gamma+I}=
\sum_{j=1}^\infty \frac{1}{z_{j1}^{\gamma_1+1}\cdots z_{jn}^{\gamma_n+1}} \\
=\sum_{K \in \Re}(-1)^{||K||+n}\sum_S \sum_J  \frac {(-1)^{s(J)}}{\beta(K, J)!} \cdot \frac {\partial^{||\beta(K, J)||}}{\partial w^{\beta(K, J)}} \left[
\widetilde \Delta \cdot
w_1^{\gamma_1+1}\cdots  w_n^{\gamma_n+1} \cdot \frac {\widetilde Q^K(s)}{\widetilde q^{K+I}(J,s)}\right]_{w=a_J},
\end{gather*}
where $\widetilde Q^K(s)=\widetilde Q_{1,s}^{k_1}\cdots \widetilde Q_{n,s}^{k_n}$.
\end{theorem}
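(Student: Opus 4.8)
The plan is to reduce the transcendental statement to the algebraic Waring's formula of Theorem~\ref{t3} by exploiting the product decomposition \eqref{f17}, and then to control the resulting infinite summation by the finite-order hypothesis. The guiding idea is that the power sum $\sigma_{\gamma+I}$ over the inverse zeros of $\{f_j=0\}$ is additive with respect to the factorization of each $f_j$, so it should split into a series of algebraic power sums, one for each family of factors, to each of which Theorem~\ref{t3} applies directly.

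Concretely, I would start from the logarithmic-residue representation of $\sigma_{\gamma+I}$ used in Theorems~\ref{t2}--\ref{t3} (i.e. after the change of variables $z_j=1/w_j$ and the passage to the cycles $\widetilde\Gamma_{h,a_J}$ of Lemmas~\ref{l2} and \ref{l3}). Since $f_j=\prod_s f_{j,s}$, the logarithmic differential is additive, $\dfrac{df_j}{f_j}=\sum_{s}\dfrac{df_{j,s}}{f_{j,s}}$, and expanding the wedge product $\bigwedge_{j=1}^n \dfrac{df_j}{f_j}$ by multilinearity decomposes the integrand into contributions indexed by the choice $S$ of factors, one drawn from each equation. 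Uniform convergence of the products \eqref{f17} on the relevant cycles permits interchanging the summation with the integration, so that $\sigma_{\gamma+I}=\sum_S \sigma_{\gamma+I}^{(S)}$, where $\sigma_{\gamma+I}^{(S)}$ is the residue integral attached to the subsystem built from the selected factors.

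Each such subsystem consists of equations of the form $q_{j,s}+Q_{j,s}=0$, i.e. it is exactly a system of type \eqref{f2} with $q_i,Q_i$ as in \eqref{f3} and \eqref{f10}. Hence Theorem~\ref{t3} applies to it verbatim and evaluates $\sigma_{\gamma+I}^{(S)}$ as the finite ($K\in\Re$) sum of residue-derivative terms with $\widetilde Q^K(s)=\widetilde Q_{1,s}^{k_1}\cdots\widetilde Q_{n,s}^{k_n}$ and with $\widetilde q^{K+I}(J,s)$, $\widetilde\Delta$, $a_J$ assembled from the selected factors through the same $w=1/z$ reduction as in Lemma~\ref{l3}. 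Summing over $S$ and interchanging the finite $K$-sum with the summation over factors then yields the asserted formula.

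The main obstacle is the analytic bookkeeping of the two limit interchanges: pulling the infinite factorization through the residue integral, and establishing convergence of the resulting series $\sum_S\sum_{K\in\Re}(\cdots)$. Here I would invoke the hypotheses that each $f_j$ has finite order at most $\rho$ and that the products \eqref{f17} converge uniformly on compacta, together with the sufficient conditions of \cite{Mysh14} on the distances from the origin to the zero sets of the factors $q_{j,s}+Q_{j,s}$. These quantitative bounds control the tails of the series and legitimize term-by-term integration; once convergence is secured, the identification of each term with the derivative expression is purely combinatorial and identical to the computation already carried out in Lemma~\ref{l3} and Theorem~\ref{t3}.
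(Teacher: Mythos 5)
Your proposal follows essentially the same route as the paper's own proof: both exploit the additivity of the logarithmic differentials $\frac{df_j}{f_j}=\sum_s\frac{df_{j,s}}{f_{j,s}}$, expand the wedge product multilinearly into subsystems indexed by a choice of one factor per equation, justify the interchange of summation and integration by uniform convergence on the cycles, apply Theorem~\ref{t3} to each resulting algebraic subsystem, and then secure convergence of the double series (the paper does this by ordering the summation over cubes centered at the origin and invoking absolute convergence of the infinite products, while you appeal to the finite-order hypothesis and the conditions of \cite{Mysh14} --- a cosmetic rather than structural difference). The proposal is correct.
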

\begin{proof}
We have
\[
\frac{d\, f_j(z)}{f_j(z)}=\frac{d\prod\limits_{s=1}^\infty f_{js}(z)}{\prod\limits_{s=1}^\infty f_{js}(z)}=\sum_{s=1}^\infty \frac{d\, f_{js}(z)}{f_{js}(z)}.
\]
It is easy to check that the above series converges uniformly on $\widetilde\Gamma_{h,a_J}$. Indeed, if a sequence of continuous on a compact set $M$ functions $f_m$ uniformly convergent to a function $f$ on $M$ and $f\neq 0$ on $M$ is given, then, starting from some number $m_0$ for $m\geqslant m_0$, the functions $f_m\neq 0$ on $M$ and the sequence $1/f_m$ will converge uniformly on $M$ to the function $1/f$. Similarly, one can show that uniform convergence is preserved under element-wise multiplication.

Uniform convergence of $\prod\limits_{s=1}^\infty f_{js}(z)$ on $\widetilde\Gamma_{h,a_J}$ yields uniform convergence of the series
\[
\sum_{s=1}^\infty \frac{d\, f_{j,s}(z)}{f_{j,s}(z)}=\frac{d\prod\limits_{s=1}^\infty f_{j,s}(z)}{\prod\limits_{s=1}^\infty f_{j,s}(z)}=\lim_{m\to \infty}\frac{d\prod\limits_{s=1}^m f_{j,s}}{\prod\limits_{s=1}^m f_{j,s}}
\]
on $\Gamma_q$. Thus, $J_\gamma(1)$ is defined and is equal to the convergent series of the integrals
\[
\frac 1{(2\pi \sqrt{-1})^n}\int\limits_{\Gamma_q}\frac 1{z^{\gamma+I}} \cdot \frac{d\, f_{1,s_1}(z)}{f_{1s_1}(z)}\wedge\ldots\wedge \frac{d\, f_{ns_n}(z)}{f_{n,s_n}(z)},
\]
where the summation is taken over the cubes with integer sides centered at the origin. And, for each such integral the required formula was proved in Theorem \ref{t3}.

If $\prod\limits_{s=1}^\infty f_{j,s}(z)$ converge absolutely, then their values does not depend on a permutation of their factors. In other words, changing the numbering of the roots does not affect the values of the infinite products of $f_{j,s}(z)$. Consequently, $J_\gamma$ also does not depend on the permutation of its terms. This yields absolute convergence of $J_\gamma$ and $\sigma_{\gamma+I}$.
\end{proof}

\noindent{\bf Remark 3.1} We are now ready to describe the scheme of elimination of unknowns. Consider a system of equations of the form as in Theorems \ref{t3} and \ref{t4}. Let $s_i$ be the power sums of its roots
\begin{equation}\label{f18}
s_i=\sigma_{i,\ldots, i}=\sum_{j=1}^\infty \frac{1}{z_{j1}^{i}\cdot\ldots\cdot z_{jn}^{i}}, \quad  i\geqslant 1. 
\end{equation}

Now, we need to find an entire function $f(w)$ of a single variable $w\in\mathbb{C}$, such that the power sums of its roots coincide with $s_i$ (by Weierstrass theorem). Let the Taylor expansion of this function be
\[
f(w)=1+b_1 w+\ldots +b_k w^k +\ldots
\]
Since the series in the right hand side of \eqref{f18} converge absolutely, then $f$ can be decomposed into an infinite product with respect to its zeros 
\[
c_i=\frac{1}{z_{j1}^{i}\cdot\ldots\cdot z_{jn}^{i}}, \ i\geqslant 1
\]
(Hadamard's formula), which yields that $f(w)$ is an entire function of at most first order of growth. The analogs of recurrent Newton formulas connecting the coefficients $b_k$ and the sums $s_i$ for such functions were given in \cite[Chapter~1]{BKL98}. More precisely, Theorem~2.3 in \cite{BKL98} states that
\[
\sum_{j=0}^{k-1}b_j s_{k-j}+kb_k=0, \quad b_0=1, \quad k\geqslant 1.
\]
These formulas allow one to find the coefficients of the function $f(w)$, whose roots are $c_i$. So, the function $f(w)$ is an analog of the resultant for a system of algebraic equations.

\section{Examples}

Since the power sums in Theorem \ref{t4} are multidimensional series, then, clearly, this theorem provides one with a method for computing multidimensional series of such kind. For this purpose, one has to find a system such that the power sums of its roots coincide with elements of the given series. Once such a system has been found, the sum of the series can then be found by using Theorem \ref{t4}.

In this section, we consider examples which admit use of the described method. One can find power sums of roots by applying Theorem \ref{t3}. Then, in the second example, we consider two functions each of which admits expansion into an infinite product of the functions considered in the first example. Using Theorem \ref{t4} we then construct the series consisting of the power sums of the roots of the system and find the sum of this series.

\begin{example}
Consider the system in two complex variables
\begin{equation}\label{f19}
\begin{cases} f_1(z_1,z_2)=(1-a_2z_2)^2+a_3z_1z^2_2=0, \\ f_2(z_1,z_2)=(1-b_1z_1)^2(1-b_2z_2)+b_3z^2_1z_2=0 \end{cases}
\end{equation}
with real coefficients $a_i$ and $b_i$. For this system, $Q_1$ and $Q_2$ are of the form \eqref{f10}. It is not hard to verify that the system \eqref{f19} has 5 roots $(z_{j1},z_{j2})$, $j=1,2,3,4,5$. If $a_2\neq b_2$, then all the roots do not lie in the coordinate hyperplanes.

After the substitution $z_1=1/w_1$, $z_2=1/w_2$, \eqref{f19} takes the form
\begin{equation}\label{f20}
\begin{cases} \widetilde f_1=w_1(w_2-a_2)^2+a_3=0, \\ \widetilde f_2=(w_1-b_1)^2(w_2-b_2)+b_3=0. \end{cases}
\end{equation}

The Jacobian $\widetilde{\Delta}$ of \eqref{f20} is equal to
\begin{multline*}
\widetilde \Delta=\vmatrix (w_2-a_2)^2 & 2 w_1(w_2-a_2) \\ 2(w_1-b_1)(w_2-b_2) & (w_1-b_1)^2 \endvmatrix \\ 
= (w_1-b_1)^2 (w_2-a_2)^2 - 4 w_1(w_1-b_1)(w_2-a_2)(w_2-b_2).
\end{multline*}

Now, using Theorem \ref{t3}, we compute the power sums
\begin{multline*}
\sigma_{\gamma}=\sum_{j=1}^5 \frac {1}{z^{\gamma_1+1}_{j1}}\cdot \frac {1}{z^{\gamma_2+1}_{j2}} \\
=\sum_J (-1)^{s(j)}\sum_{K \in \Re} \frac {(-1)^{\|K\|}}{(2\pi i)^2} \int\limits_{\widetilde \Gamma_{h,{a_J}}}\frac {w_1^{\gamma_1+1} \cdot w_2^{\gamma_2+1}\cdot a_3^{k_1} \cdot b_3^{k_2} \cdot \widetilde \Delta \cdot dw_1 \land dw_2}{w_1^{k_1+1}(w_2-a_2)^{2(k_1+1)}\cdot (w_1-b_1)^{2(k_2+1)}(w_2-b_2)^{k_2+1}}.
\end{multline*}
Here $\Re=\{ K=(k_1,k_2)\colon \text{there exists}\ i\ \text{such that}\ \gamma_i+2 > k_1+k_2\ \text{for}\ i=1,2\}$, and $\widetilde \Gamma_{h,a_J}$ are the cycles either $\{|w_1|=r_{11}, |w_2-b_2|=r_{22}\}$ oriented positively or $\{|w_2-a_2|=r_{12}, |w_1-b_1|=r_{21}\}$ oriented negatively.

In particular, by computing $J_{(0,0)}$ and performing necessary algebra, we obtain that
\begin{equation}\label{f21}
\sigma_{(1,1)}=4a_2b_1-\frac{a_3b_2}{(b_2-a_2)^2}
\end{equation}
without finding the roots.
\end{example}

\begin{example}
Recall the known expansions of $\sin{z}$ into an infinite product and a power series:
\[
\frac{\sin\sqrt z}{\sqrt z}=\prod_{k=1}^\infty\left(1-\frac z{k^2\pi^2}\right)=\sum_{k=0}^\infty \frac{(-1)^kz^k}{(2k+1)!}.
\]
Both are absolutely and uniformly convergent on any compact subset of the complex plane.

Consider the system
\begin{equation}\label{f22}
\begin{cases} f_1=\dfrac{\sin\sqrt {2a_2z_2-a^2_2z^2_2-a_3z_1z^2_2}}{\sqrt{2a_2z_2-a^2_2z^2_2-a_3z_1z^2_2}}=
\prod\limits_{k=1}^\infty\left(\left(1-\dfrac{a_2z_2}{k^2\pi^2}\right)^2+\dfrac{a_3z_1z^2_2}{k^2\pi^2}\right)=0,\\
f_2=\dfrac{\sin\sqrt{b_2z_2+2b_1z_1-2b_1b_2z_1z_2-b^2_1z^2_1+b^2_1b_2z^2_1z_2-b_3z^2_1z_2}}
{\sqrt{b_2z_2+2b_1z_1-2b_1b_2z_1z_2-b^2_1z^2_1+b^2_1b_2z^2_1z_2-b_3z^2_1z_2}}\\
=\prod\limits_{m=1}^\infty \left(\left(1-\dfrac{b_1z_1}{m^2\pi^2}\right)^2\left(1-\dfrac{b_2z_2}{m^2\pi^2}\right)+\dfrac{b_3z^2_1z_2}{m^2\pi^2}\right)=0. \end{cases}
\end{equation}
Each functions in \eqref{f22} can be expanded into an infinite product of the functions from \eqref{f19}. The system \eqref{f22} has an infinite number of roots. The assumption $a_2\cdot b_2<0$ implies that no roots of \eqref{f22} lie in the coordinate planes.

Using \eqref{f21}, we find that
\begin{equation}\label{f23}
\sigma_{(1,1)}=\sum_{j=1}^\infty \frac {1}{z_{j1}}\cdot \frac {1}{z_{j2}}=\sum_{k,m=1}^{\infty} \frac {4a_2b_1}{\pi^4k^2m^2}-
\sum_{k,m=1}^{\infty} \frac{a_3b_2}{\pi^2(a_2m^2-b_2k^2)^2}.
\end{equation}
Using the formula \cite[No.~2, Section~5.1.25]{PBM86}
\[
\sum_{k=1}^{\infty} \frac{1}{(k^2+a^2)^2}=\frac{-1}{2a^4}+\frac{\pi}{4a^3}\coth(\pi a)+\frac{\pi^2}{4a^2}\cdot\frac 1{{\sinh}\,^2(\pi a)}
\]
we find the sum of the first series in the right hand side of \eqref{f23} 
\[
\sum_{k,m=1}^{\infty} \frac {4a_2b_1}{\pi^4k^2m^2}=\frac {a_2b_1}{9}
\]
and, respectively, the sum of the second series in the right hand side of \eqref{f23}
\begin{multline*}
\sum_{k,m=1}^{\infty} \frac {a_3b_2}{\pi^2(a_2m^2-b_2k^2)^2}=-\sum_{k=1}^{\infty} \frac {a_3}{2\pi^2 b_2k^4}
\\ -\sum_{k=1}^{\infty} \frac {a_3}{4\pi\sqrt{-a_2b_2}k^3} \coth(\pi \sqrt{-b_2/a_2}k)-
\sum_{k=1}^{\infty} \frac{a_3}{4a_2k^2} \cdot\frac 1{{\sinh}^2(\pi \sqrt{-b_2/a_2}k)}.
\end{multline*}
Here the sum of the first series in the right hand side is
\[
\sum_{k=1}^{\infty} \frac {a_3}{2\pi^2 b_2k^4}= \frac {a_3\pi^2}{180 b_2}.
\]

We now find the sum of the second series. Let $_2\Phi_1(e^{2t}, e^{2t}; e^{4t}, x)$ be a basic hypergeometric series (see, e.g., \cite[p.~793]{PBM86}). We use the known formula \cite[No.~13, Section~5.2.18]{PBM86}
\begin{multline*}
\sum_{k=1}^{\infty} \frac {x^{k-1}}{e^{2tk}-1}=\frac 1{x} \sum_{k=1}^{\infty} \frac {x^k}{e^{2tk}-1} \\
=\frac 1{x} \cdot \frac {x}{e^{4t}-1} {_2\Phi_1(e^{2t}, e^{2t}; e^{4t}, x)=\frac 1{e^{4t}-1}} {_2\Phi_1(e^{2t}, e^{2t}; e^{4t}, x)}.
\end{multline*}
Therefore,
\begin{multline}\label{f24}
\sum_{k=1}^{\infty} \frac {\coth(t k)}{k^3}=\sum_{k=1}^{\infty} \frac 1{k^3} + 2\sum_{k=1}^{\infty} \frac 1{k^3(e^{2ts}-1)} 
\\ =\zeta(3) + 2\frac 1{e^{4t}-1} \int_{0}^{1} \frac 1{y} \,dy \int_{0}^{y} \frac 1{v} \,dv \int_{0}^{v} {_2\Phi_1(e^{2t}, e^{2t}; e^{4t}, u)}\,du
\\ =\zeta(3) + \frac 1{e^{4t}-1} \int_{0}^{1} \ln^2{y} \cdot {_2\Phi_1(e^{2t}, e^{2t}; e^{4t}, y)}\,dy.
\end{multline}

In order to find the sum of the third series we rewrite $\dfrac 1{{\sinh}^2(tk)}$ as
\[
\frac 1{{\sin h}^2(tk)}= \left(\frac {2}{e^{tk}-e^{-tk}}\right)^2=\frac {4e^{2tk}}{(e^{2tk}-1)^2}  .
\]

Now, since
\[
\frac {\partial}{\partial t} \left[\frac {1}{e^{tk}-1} \right]=-\frac {2ke^{2tk}}{(e^{2tk}-1)^2},
\]
we have
\[
\frac{1}{(e^{2tk}-1)^2}=-\frac{1}{e^{2tk}-1}-\frac{1}{2k}\cdot\frac{\partial}{\partial t}\left[\frac {1}{e^{2tk}-1} \right].
\]
Consequently,
\[
\sum_{k=1}^{\infty} \frac 1{{\sinh}^2(tk)\cdot{k^2}}=-\frac{\partial}{\partial t}\left[\sum_{k=1}^\infty\frac 2{k^3(e^{2tk}-1)}\right]
\]

Thus, using the formula \eqref{f24}, we find that the series $\sigma_{(1,1)}$ is expressed in terms of the values of some integrals and known series without calculating the roots of the system.
\end{example}

\section*{Acknowledgements}

This research was supported by grants of the President of the Russian Federation for young scientists, project MD-197.2017.1 and for leading scientific schools, project NSh-9149.2016.1, by grant of the Government of the Russian Federation for investigations under the guidance of the leading scientists of the Siberian Federal University (contract No.~14.Y26.31.0006), and by the Russian Foundation for Basic Research, project 15-01-00277-a.

\end{document}